\documentclass[11pt, a4paper]{article}
\usepackage[T1]{fontenc}
\usepackage[utf8]{inputenc}
\usepackage{geometry}
\usepackage{graphicx} % Required for inserting images
\usepackage{amsthm}
\usepackage{amsmath}
\usepackage{thmtools}
\usepackage{hyperref}
\usepackage{amssymb}
\usepackage{xcolor}
\usepackage{comment}
\usepackage{xspace}
\usepackage{tikz}
\usepackage{complexity}
\usepackage{authblk}
\usepackage{mathtools}

\usepackage{setspace}\setstretch{1.044}

\usepackage[backend=biber, style=numeric, maxbibnames=99]{biblatex}
\addbibresource{biblio.bib}
\renewbibmacro{in:}{\ifentrytype{article}{}
    {\printtext{\bibstring{in}\intitlepunct}}}
\DeclareFieldFormat[article]{pages}{#1}

\declaretheorem[name=Theorem, numberwithin=section]{theorem}
\declaretheorem[name=Lemma, sibling=theorem]{lemma}
\declaretheorem[name=Proposition, sibling=theorem]{proposition}

\declaretheorem[name=Corollary, sibling=theorem]{corollary}

\declaretheorem[name=Construction, sibling=theorem]{construction}

\declaretheorem[name=Question, sibling=theorem]{question}
\declaretheorem[name=Claim, numbered=no]{claim}

\declaretheorem[name=Observation, sibling=theorem]{observation}

\newenvironment{manualtheorem}[1]{\IfBlankTF{#1}
    {}
    {}\manualtheoreminner}%
    {\endmanualtheoreminner}

\renewcommand{\bar}[1]{\overline{#1}}

\newcommand{\Ck}{\mathcal{C}_k(G)}
\newcommand{\KCk}{\mathcal{K}_k(G)}
\newcommand{\mcG}{\mathcal{G}}

\newcommand{\IkJ}{\mathcal{I}^k_{\rm J}}
\newcommand{\IkS}{\mathcal{I}^k_{\rm S}}
\newcommand{\IkAR}{\mathcal{I}^k_{\rm AR}}

\title{Determining a graph from its reconfiguration graph}
\author[1]{Ga\'etan Berthe}
\author[2]{Caroline Brosse}
\author[3]{Brian Hearn}
\author[3]{\linebreak Jan van den Heuvel}
\author[4]{Pierre Hoppenot}
\author[5]{Th\'eo Pierron}

\affil[1]{Universit\'e Clermont Auvergne, LIMOS, CNRS, Clermont-Ferrand, France\linebreak
\href{mailto:gaetan.berthe@limos.fr}{\tt gaetan.berthe@limos.fr}}
\affil[2]{Universit\'e d’Orl\'eans, INSA CVL, LIFO, UR 4022, Orl\'eans, France\linebreak
\href{mailto:caroline.brosse@univ-orleans.fr}{\tt caroline.brosse@univ-orleans.fr}}
\affil[3]{Department of Mathematics, London School of Economics \& Political Science, London, UK \linebreak
\href{mailto:b.hearn@lse.ac.uk}{\tt b.hearn@lse.ac.uk}, \href{mailto:j.van-den-heuvel@lse.ac.uk}{\tt j.van-den-heuvel@lse.ac.uk}}
\affil[4]{Universit\'e Grenoble Alpes, CNRS, Grenoble INP, G-SCOP, Grenoble, France\linebreak
\href{mailto:pierre.hoppenot@grenoble-inp.fr}{\tt pierre.hoppenot@grenoble-inp.fr}}
\affil[5]{Universit\'e Lyon, UCBL, CNRS, INSA Lyon, LIRIS, UMR5205, Villeurbanne, France\linebreak
\href{mailto:theo.pierron@univ-lyon1.fr}{\tt theo.pierron@univ-lyon1.fr}}

\begin{document}

\maketitle

{\def\thefootnote{}\footnotetext{The present work was initiated when CB was a post-doctoral researcher at CNRS, Universit\'e C\^ote d'Azur, Inria, I3S, Sophia-Antipolis, France.

CB acknowledges the support of the French Agence Nationale de la Recherche (ANR) under contract Digraphs ANR-19-CE48-0013-01.

TP is supported by ANR, under contract GrR ANR-18-CE40-0032.}}

\begin{abstract}
	\noindent
	Given a graph~$G$ and a natural number~$k$, the $k$-recolouring graph~$\mathcal{C}_k(G)$ is the graph whose vertices are the \mbox{$k$-colourings} of~$G$ and whose edges link pairs of colourings which differ at exactly one vertex of~$G$.
	Recently, Hogan \textit{et al.}\ proved that~$G$ can be determined from~$\mathcal{C}_k(G)$ provided~$k$ is large enough (quadratic in the number of vertices of~$G$).
	We improve this bound by showing that $k=\chi(G)+1$ colours suffice, and provide examples of families of graphs for which $k=\chi(G)$ colours do not suffice.

	We then extend this result to $k$-Kempe-recolouring graphs, whose vertices are again the $k$-colourings of a graph~$G$ and whose edges link pairs of colourings which differ by swapping the two colours in a connected component of the subgraph induced by selecting those two colours.
	We show that $k=\chi(G)+2$ colours suffice to determine~$G$ in this case.

	Finally, we investigate the case of independent set reconfiguration, proving that in only a few trivial cases is one guaranteed to be able to determine a graph~$G$.
\end{abstract}

\section{Introduction and main results}

\subsection{Single-vertex recolouring}

Let~$G$ be a graph and~$k$ a positive integer.
A (proper) \emph{$k$-colouring} of~$G$ is an assignment of colours from $[k]:=\{1,2,\ldots,k\}$ to the vertices of~$G$ such that no adjacent vertices receive the same colour.
The \emph{chromatic number} of~$G$, denoted by~$\chi(G)$, is the smallest~$k$ for which~$G$ admits a $k$-colouring.

The \emph{$k$-recolouring graph~$\Ck$} is the graph whose vertices are all the $k$-colourings of~$G$ and whose edges link pairs of $k$-colourings which differ at exactly one vertex.
In this paper we will make no distinction between a vertex of~$\Ck$ and the colouring of~$G$ it represents.

Recolouring graphs have been the object of a growing amount of research.
One of the first aspects that was studied is their use as a tool for generating $k$-colourings (almost) uniformly at random.
This is done by starting from an initial colouring, and then repeatedly uniformly at random selecting a vertex and trying to recolour it with a uniformly random colour from $[k]$.
This process corresponds to following a random walk in~$\Ck$.
For this process to yield an (almost) uniform distribution within a polynomial number of steps, it is necessary (but not sufficient) for~$\Ck$ to be connected and to have polynomial diameter.
Therefore, how these two parameters evolve with respect to the number~$k$ of allowed colours has been the starting point of quite some research.

Since the question of whether a reconfiguration graph is connected or not is essential for many applications, much research has been done also on the computational complexity of this question for different reconfiguration graphs.
Many decision problems of this type are now known to be algorithmically hard (often \PSPACE-complete).
We refer the interested reader to the surveys~\cite{van2013complexity,nishimura2018introduction}.

Other \emph{reconfiguration graphs} have been studied as well; some of them a long time ago.
Reidemeister moves of knot diagrams can be seen as providing edges between different representations of knots.
Furthermore, solving a Rubik's cube consists of finding a path between two configurations in some reconfiguration graph.

\medskip
By definition, giving~$G$ and~$k$ completely determines~$\Ck$.
In early 2024, Asgarli \textit{et al.}~\cite{asgarli2025} asked if the reverse can also be true.
More explicitly, they conjectured that every graph~$G$ is uniquely determined by the collection of recolouring graphs~$\Ck$, $k\in\mathbb{Z}^+$.
Moreover, they conjectured that every graph~$G$ is in fact uniquely determined by some finite subcollection of these recolouring graphs.
Shortly afterwards, these conjectures were answered in a strong sense by Hogan \textit{et al.}~\cite{hogan2024}, who proved that a single recolouring graph~$\Ck$ suffices to determine~$G$, provided~$k$ is large enough.

\begin{theorem}[Hogan \textit{et al.}\ \cite{hogan2024}]
\label{thm:hogan}\mbox{}\\*
	Let~$G$ be an $n$-vertex graph.
	If $k>5n^2$, then~$G$ can be determined from the recolouring graph~$\Ck$, assuming we know~$k$.
\end{theorem}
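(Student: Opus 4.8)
The plan is to reconstruct $G$ directly from the \emph{local} structure of $\Ck$ around a single, arbitrarily chosen vertex, recovering the vertex set and the edge set of $G$ at the same time. The hypothesis $k>5n^2$ will be used only to guarantee that ``enough spare colours'' are always available; in fact a much weaker lower bound on $k$ suffices for this particular argument.

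First I would fix any colouring $c\in V(\Ck)$ (one exists, since $k>5n^2\ge\chi(G)$) and examine its neighbourhood $N(c)$. Every $c'\in N(c)$ differs from $c$ at a unique vertex $\mathrm{diff}(c')\in V(G)$. I would show that two neighbours $c',c''$ of $c$ are adjacent in $\Ck$ if and only if $\mathrm{diff}(c')=\mathrm{diff}(c'')$: if they differ from $c$ at the same vertex $v$ they agree off $v$ and are at Hamming distance $1$, whereas if they differ from $c$ at distinct vertices $v\ne w$ they disagree at both $v$ and $w$, hence are at Hamming distance $2$. Consequently the connected components of the induced subgraph $\Ck[N(c)]$ are exactly the classes $D_v=\{c':\mathrm{diff}(c')=v\}$, one per vertex $v$ of $G$ (each nonempty, since at most $n-1$ colours appear on $N(v)$, leaving $v$ at least $k-n\ge 1$ available recolourings). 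This already recovers the vertex set: vertices of $G$ correspond bijectively to components of $\Ck[N(c)]$, and $n=|V(G)|$ is read off as their number.

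The heart of the argument is a purely local adjacency test. Given two components $D_u,D_v$, I claim $u\sim v$ in $G$ if and only if there exist $c'\in D_u$ and $c''\in D_v$ whose only common neighbour in $\Ck$ is $c$. The key computation is that two colourings at Hamming distance $2$ --- here $c'$ and $c''$, which disagree exactly on $\{u,v\}$ --- have at most two common neighbours, namely the two ``corners'' obtained by reconciling the two coordinates one at a time; one corner is always $c$ itself, and the other is the colouring $e$ agreeing with $c$ except that $u,v$ take the values $c'(u),c''(v)$. When $u\not\sim v$ this corner $e$ is always proper, so every such pair $(c',c'')$ has two common neighbours. When $u\sim v$, I would instead use the abundance of colours to pick $c',c''$ recolouring $u$ and $v$ to a common colour $x$ (possible because at most $2n<k$ colours are forbidden on $N(u)\cup N(v)$); then the second corner would force $u$ and $v$ to share colour $x$, which is improper, so $c$ is the unique common neighbour. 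This two-sided analysis yields the test.

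Finally I would assemble the reconstruction: build a graph on the components of $\Ck[N(c)]$ and join two of them precisely when the unique-common-neighbour test succeeds; by the above this graph is isomorphic to $G$, proving the theorem. I expect the main obstacle to be the common-neighbour analysis of distance-$2$ colourings, together with the colour-counting that guarantees, in the adjacent case, a ``clashing'' pair $(c',c'')$ witnessing the test --- this is exactly where the size of $k$ enters, and where one must verify that no extraneous common neighbour sneaks in. The remaining steps (nonemptiness of the $D_v$ and the Hamming-distance dichotomy inside $N(c)$) are routine.
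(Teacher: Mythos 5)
Your proposal is correct and follows essentially the same route as the paper's Section~\ref{section:vertex-recoloration}: you identify the vertices of $G$ with the cliques $C_{c,u}$ partitioning $N_{\Ck}(c)$, and detect edges via the unique-common-neighbour criterion, which is exactly the combination of Lemma~\ref{lem:chi+1-general} (the non-adjacent case, via the second ``corner'' colouring) and Lemma~\ref{lem:exact-chi+1} (the adjacent case, recolouring $u$ and $v$ to a common spare colour). Your observation that a single colouring $c$ suffices and that much weaker bounds on $k$ would do is likewise the content of the paper's Theorem~\ref{thm:algo}, where $k>\min\{n,2\Delta\}$ guarantees $G_c\cong G$ for every $c$.
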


\noindent
Our first contribution is to improve Theorem~\ref{thm:hogan} by first showing that a much smaller~$k$ suffices, and by removing the assumption that the value of~$k$ is known in advance.

\begin{theorem}
\label{thm:chi+1-general}\mbox{}\\*
	Let~$G$ be a graph.
	If $k>\chi(G)$, then~$G$ can be determined from~$\Ck$, even without knowing the exact value of~$k$.
\end{theorem}

\noindent
Similarly to the result in~\cite{hogan2024}, the proof of Theorem~\ref{thm:chi+1-general} provides a recovery algorithm which runs in polynomial time with respect to the size of~$\Ck$ (which in general is exponential in the size of~$G$).
However, we can prove that by using more colours, we can improve this recovery algorithm to run in polynomial time with respect to $|V(G)|$.

\begin{theorem}
\label{thm:algo}\mbox{}\\*
	Let~$G$ be an $n$-vertex graph of maximum degree $\Delta\ge1$.
	If $k>\min\{n,2\Delta\}$, then~$G$ can be determined from~$\Ck$ by considering at most $(kn)^2$ different colourings in~$\Ck$, even without knowing the exact value of~$k$.
\end{theorem}

\noindent
In addition, if $\Delta=0$, \textit{i.e.}\ if~$G$ has no edges, then our proof gives that~$G$ can be determined easily from~$\Ck$ if $k>1$.
The proofs of Theorems~~\ref{thm:chi+1-general} and~\ref{thm:algo} for the most part utilise observations that were already present in the proof of Theorem~\ref{thm:hogan} in~\cite{hogan2024}.
However the authors in~\cite{hogan2024} use a counting argument, whereas we utilise further structural observations.

The bound on the number of colours in Theorem~\ref{thm:chi+1-general} is tight; a graph~$G$ is not necessarily uniquely determined by its $k$-recolouring graph $\mathcal{C}_k(G)$ if $k=\chi(G)$.
For example, the \mbox{$2$-colouring} graph of any connected bipartite graph is the graph with two vertices and no edges.
And for any $\chi\ge2$, the $\chi$-recolouring graph of any $(\chi-1)$-tree (any graph formed by starting with the complete graph on~$\chi$ vertices, adding a vertex~$v$ such that the neighbours of~$v$ form a clique on $\chi-1$ vertices, and repeating this process finitely many times) is the graph with $\chi!$ vertices and no edges.

These examples exploit the fact that some vertices of the graph~$G$ are \emph{frozen}, \textit{i.e.}\ they cannot be recoloured under any $\chi(G)$-colouring of~$G$.
If a graph~$G$ contains any frozen vertices, then its recolouring graph is non-unique.
To see this, add a new vertex to~$G$ with the same set of neighbours as some frozen vertex~$v$.
This new graph will have the same $\chi(G)$-recolouring graph as~$G$.

However, we will show that the existence of frozen vertices is not necessary for a graph~$G$ to have a non-unique $\chi(G)$-recolouring graph.

\begin{theorem}
\label{thm:counterexample}\mbox{}\\*
	For every $\chi\ge 6$ there exists an arbitrarily large family~$\mathcal{F}$ of non-isomorphic graphs, all with chromatic number~$\chi$ and without frozen vertices, such that $\mathcal{C}_\chi(G)\cong\mathcal{C}_\chi(G')$ for all $G,G'\in \mathcal{F}$.
\end{theorem}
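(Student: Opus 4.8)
The plan is to reduce the statement to a purely ``colouring-set'' problem and then to build an explicit gadget. The starting observation, which I would prove first, is the following lemma: if two graphs $G_1,G_2$ on the same vertex set $V$ admit exactly the same set $\mathcal S$ of proper $\chi$-colourings, then $\mathcal{C}_\chi(G_1)=\mathcal{C}_\chi(G_2)$ (not merely isomorphic). Indeed the vertex set of each recolouring graph is $\mathcal S$, and two colourings are adjacent precisely when they differ at a single vertex of $V$, a condition referring only to the colourings as functions and not to the edges of the underlying graph. For the same reason, both ``$\chi(G_i)=\chi$'' and ``$G_i$ has no frozen vertex'' are properties of $\mathcal S$ alone (a vertex $w$ is frozen iff no element of $\mathcal S$ can be altered at $w$ while staying in $\mathcal S$). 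Hence it suffices, for each $\chi\ge 6$, to produce one colouring set $\mathcal S$ of chromatic number $\chi$ and with no frozen vertex that is realised by arbitrarily many pairwise non-isomorphic graphs.

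To generate many realisations of a single $\mathcal S$, I would fix a base graph $H$ and toggle its \emph{implied edges}, i.e.\ the non-edges $\{u,v\}$ for which every colouring in $\mathcal S$ gives $u,v$ distinct colours. Writing $H^{+}$ for $H$ together with all implied edges, every graph $G$ with $E(H)\subseteq E(G)\subseteq E(H^{+})$ has proper-colouring set exactly $\mathcal S$: each $\phi\in\mathcal S$ respects all of $E(H^{+})\supseteq E(G)$, while $E(G)\supseteq E(H)$ forces $\text{proper}(G)\subseteq\text{proper}(H)=\mathcal S$. In particular every such $G$ has $\chi(G)=\chi$, for free. So the whole construction reduces to building a single graph $H$ that (a) has chromatic number $\chi$, (b) has no frozen vertex, and (c) has a large, independently toggleable family of implied edges.

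Step (c) together with (b) is where I expect the real difficulty, and presumably the reason the theorem requires $\chi\ge 6$. The tension is that an implied edge is a symptom of rigidity (two vertices \emph{forced} apart), whereas the absence of frozen vertices demands local slack at every vertex. The naive idea---take a $(\chi{+}1)$-critical, $K_\chi$-free graph, delete a vertex $v$ of degree $\chi$ with independent neighbourhood $N(v)$, so that $N(v)$ is forced to be rainbow and hence spans $\binom{\chi}{2}$ implied edges---fails exactly here: a forced rainbow makes every vertex of $N(v)$ frozen. The fix I would pursue is to replace the forced rainbow by a \emph{bipartite} forcing: design a gadget in which two independent sets $A,B$ are forced to use \emph{disjoint} colour palettes (so that all $|A|\cdot|B|$ cross pairs are implied) while each individual vertex still retains a spare colour in some colouring. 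Here $\chi\ge 6$ should provide exactly the room needed: split the palette so that $A$ and $B$ each live on a small band, use an auxiliary structure to enforce disjointness, and keep enough colours free that one can exhibit, for every vertex, a colouring in which its neighbourhood misses a colour. The subtle point---and the technical heart of the proof---is that the enforcing structure must \emph{not} be a rigid clique, since a clique adjacent to both bands would see all $\chi$ colours in every colouring and hence be frozen.

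Finally, for multiplicity and non-isomorphism I would scale the gadget so that $|A|=|B|=t$, giving $\Theta(t^{2})$ independently toggleable implied edges and therefore $2^{\Theta(t^{2})}$ graphs sharing the same recolouring graph $\mathcal{C}_\chi$, all of chromatic number $\chi$ and all frozen-vertex-free. Since $H$ has $O(t)$ vertices, each isomorphism class contains at most $(O(t))!=2^{O(t\log t)}$ of these graphs, so the number of isomorphism classes is at least $2^{\Theta(t^{2})}/2^{O(t\log t)}\to\infty$. Choosing one representative per class and letting $t\to\infty$ then yields, for each $\chi\ge 6$, an arbitrarily large family $\mathcal F$ with the required properties.
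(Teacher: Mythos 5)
Your reduction is sound and in fact mirrors the paper's own strategy: the paper proves exactly your first lemma as an in-proof claim (adding an edge $uv$ between vertices that receive distinct colours in every $\chi$-colouring leaves $\mathcal{C}_\chi$ unchanged), and its family $\mathcal{F}$ is obtained precisely by toggling the implied edges between two large sets forced onto disjoint palettes. Your counting argument for non-isomorphism ($2^{\Theta(t^2)}$ realisations versus at most $2^{O(t\log t)}$ graphs per isomorphism class) is valid, and is even more explicit than the paper's remark that the family can be made arbitrarily large. But the proposal stops exactly where you yourself locate ``the technical heart of the proof'': you never construct the graph $H$ satisfying (a), (b), (c). You correctly diagnose that a rainbow-forcing clique freezes the forced vertices, and you correctly posit that one needs a non-rigid structure forcing two independent sets $A,B$ onto disjoint colour bands while every vertex retains slack --- but the existence of such a gadget \emph{is} the content of the theorem, and ``the fix I would pursue'' and ``$\chi\ge 6$ should provide exactly the room needed'' are plans, not proofs. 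As written, the argument establishes only a conditional statement: \emph{if} such an $H$ exists, the theorem follows.

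The paper fills this gap with Mycielskians. Take $p\in[3,\chi-3]$ and set $H_1=M^{p-3}(C_5)$, $H_2=M^{\chi-p-3}(C_5)$, completely joined. Since $\chi(H_1)=p$ and $\chi(H_2)=\chi-p$, every $\chi$-colouring of $H_1\bowtie H_2$ must split the palette into disjoint sets of sizes exactly $p$ and $\chi-p$; attaching $A=V(H_0)$ by a complete join to $H_1$ and $B=V(H_3)$ by a complete join to $H_2$ (with the strict inequalities $\chi(H_0)<\chi-p$ and $\chi(H_3)<p$) forces $A$ and $B$ into complementary bands, making all $|A|\cdot|B|$ cross pairs implied --- this is your bipartite forcing. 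The non-rigidity you were seeking comes from the fact that Mycielski's construction raises the chromatic number by one while preserving the absence of frozen vertices (the paper's Lemma 2.5, starting from $C_5$, which has none); the strict chromatic-number bounds on $H_0,H_3$ leave each of their vertices a spare colour within its band. The constraint $\chi\ge 6$ appears because both towers must start at $C_5$ (chromatic number $3$), i.e.\ $p\ge 3$ and $\chi-p\ge 3$. So your architecture is the right one, but without an explicit gadget of this kind the proof is incomplete.
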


\noindent
Note that all of these results can be easily translated to the case of edge-colourings, since colouring the edges of a graph~$G$ amounts to vertex-colouring the line graph of~$G$.
In particular, one can determine the line graph of~$G$ from its $k$-edge-recolouring graph when~$k$ is larger than the chromatic index of~$G$.
Using a classical theorem of Whitney~\cite{whitney1992}, this allows one to determine~$G$ up to its connected components that are isomorphic to a triangle, a 3-edge star, or a single vertex.

\paragraph*{Note}\mbox{}\\*
Shortly after we published this paper on arXiv, Asgarli \textit{et al.}~\cite{asgarli2025arxiv} uploaded a paper containing results that are similar to our results in this subsection.
In particular, they prove the non-trivial extension of Theorem~\ref{thm:chi+1-general} that if~$G$ is a graph and $k>\chi(G)$, then for all graphs~$G'$ we have $\mathcal{C}_{\chi(G')}(G')\not\cong\mathcal{C}_k(G)$.

\subsection{Kempe recolouring}

In the previous part we considered reconfiguration graphs obtained by changing the colour of only one vertex at a time.
There is another operation that is natural to consider in problems involving recolouring: performing a Kempe swap.
Given a $k$-colouring of a graph, a \emph{Kempe swap} involves swapping the colours in a maximal connected subgraph induced by two given colours.
Note that recolouring a single vertex is a special case of Kempe swap, called a \emph{trivial Kempe swap}.

The Kempe swap operation leads to the definition of the \emph{$k$-Kempe-recolouring graph $\KCk$}, whose vertices are all the $k$-colourings of~$G$ and whose edges link $k$-colourings that differ by a single Kempe swap.
Observe that~$\Ck$ and~$\KCk$ have the same vertex set, while the trivial Kempe swaps show that $E(\Ck)\subseteq E(\KCk)$.
As before, we make no distinction between a vertex of~$\KCk$ and the $k$-colouring of~$G$ it represents.

We extend the previous results to the case of Kempe recolouring.
We show that when~$k$ is large enough, we can recognise which edges of~$\KCk$ correspond to trivial Kempe swaps.
In other words, we can recognise which edges of~$\KCk$ induce a copy of~$\Ck$, and thus apply the machinery from the single-vertex recolouring case.

\begin{theorem}
\label{thm:kempe}\mbox{}\\*
	Let~$G$ be a graph.
	If $k>\chi(G)+1$, then~$G$ can be determined from~$\KCk$, even without knowing the exact value of~$k$.
\end{theorem}

\noindent
The same examples used for Theorem~\ref{thm:counterexample} show that the Kempe-recolouring graph is also not unique if $k=\chi(G)$.
This leaves open the case where $k=\chi(G)+1$.

\begin{question}\mbox{}\\*
	\textup{Can every graph~$G$ be determined from $\mathcal{K}_{\chi(G)+1}(G)$?}
\end{question}

\noindent
Similarly to the single-vertex recolouring case, our proof of Theorem~\ref{thm:kempe} provides a recovery algorithm which runs in polynomial time with respect to the size of $\KCk$, which can be improved to run in polynomial time with respect to $|V(G)|$ if the number of colours used is large enough.

%\clearpage
\begin{theorem}
\label{thm:algo-kempe}\mbox{}\\*
	Let~$G$ be an $n$-vertex graph of maximum degree $\Delta\ge1$.
	If $k>\min\{n,2\Delta\}+1$, then~$G$ can be determined from~$\KCk$ by considering at most $(kn)^2$ different colourings in~$\KCk$, even without knowing the exact value of~$k$.
\end{theorem}

\subsection{Independent set reconfiguration}

In this last part we investigate the realm of independent set reconfiguration.
An \emph{independent set} in a graph is a set of pairwise non-adjacent vertices.
We interpret each independent set as a set of tokens placed on its vertices, and allow tokens to move according to one of three possible rules (so long as the independent set property is not broken):
\begin{itemize}
	\setlength\itemsep{0mm}
	\item \emph{Token Jumping} (TJ for short): a token can be moved to any other vertex.
	\item \emph{Token Sliding} (TS for short): a token can only be moved to an adjacent vertex.
	\item \emph{Token Addition and Removal} (TAR for short): a single token can be added or removed.
\end{itemize}

\noindent
Given a graph~$G$, we denote by $\IkJ(G)$ the reconfiguration graph of all independent sets of size~$k$ under the TJ rule.
In other words, each vertex of $\IkJ(G)$ corresponds to an independent set of size~$k$ in~$G$, and each edge corresponds to a valid token jump.
We define~$\IkS(G)$ similarly.

Note that TJ and TS preserve the number of tokens, but that this is not the case for TAR.
Therefore, the definition of a TAR-reconfiguration graph is slightly different: $\IkAR(G)$ has a vertex for each independent set of~$G$ of size \emph{at least}~$k$, and each edge corresponds to a single token addition or removal.

It turns out that the situation for independent set reconfiguration is quite different from what we saw previously for recolouring.
Aside from some trivial cases, one is not guaranteed to be able to determine~$G$ from any of the token-reconfiguration graphs defined above.
More precisely, we prove the following.

\begin{theorem}
\label{thm:IS}\mbox{}\\*
	Let~$G$ be a graph.
	If $\,\mathrm{R}=\mathrm{S}$ and $k=1$, or $\,\mathrm{R}=\mathrm{AR}$ and $k\in\{0,1\}$, one can determine~$G$ from $\mathcal{I}^k_\mathrm{R}(G)$.
	In all other cases, there exist non-isomorphic graphs~$G$ with isomorphic independent set reconfiguration graphs.
\end{theorem}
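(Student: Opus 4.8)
The plan is to handle the three positive cases by direct reconstruction and to dispatch every remaining case with an explicit pair of non-isomorphic graphs. For $\mathrm{R}=\mathrm{S}$ and $k=1$, the vertices of $\mathcal{I}^1_{\mathrm S}(G)$ are the singletons $\{v\}$, which I identify with $V(G)$, and $\{u\}$ is adjacent to $\{v\}$ exactly when the token can slide from $u$ to $v$, i.e.\ when $uv\in E(G)$; hence $\mathcal{I}^1_{\mathrm S}(G)\cong G$ and $G$ is read off immediately. For $\mathrm R=\mathrm{AR}$ and $k=0$, write $R=\mathcal{I}^0_{\mathrm{AR}}(G)$. This graph is connected (every independent set can be emptied one token at a time) and graded by set size, and a direct count gives $\deg_R(I)=n-|N_G(I)|$ with $n=|V(G)|$. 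Thus $\Delta(R)=n$, which recovers $|V(G)|$, and the maximum-degree vertices are exactly the independent sets $I$ with $N_G(I)=\emptyset$, namely the subsets of the set of isolated vertices of $G$. Toggling a single isolated vertex is an automorphism of $R$, and these automorphisms act transitively on those subsets, so all maximum-degree vertices lie in one orbit of $\mathrm{Aut}(R)$ containing $\emptyset$. I therefore pick any maximum-degree vertex $b$ as basepoint; distances from $b$ recover set sizes (for $b=\emptyset$ one has $d_R(\emptyset,I)=|I|$, and a general $b$ is an automorphic image of $\emptyset$), the $n$ neighbours of $b$ play the role of singletons, and two of them, $x$ and $y$, are made adjacent in the reconstructed graph precisely when they have no common neighbour other than $b$. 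Since $b$ is the image of $\emptyset$ under an automorphism, this returns $G$ up to isomorphism.

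For $\mathrm R=\mathrm{AR}$ and $k=1$ I would reduce to the previous case. Here $R'=\mathcal{I}^1_{\mathrm{AR}}(G)$ is $R$ with the vertex $\emptyset$ and its incident edges deleted, so it suffices to locate the singletons inside $R'$ and re-attach an apex adjacent to exactly them: the resulting graph is $\mathcal{I}^0_{\mathrm{AR}}(G)$, to which the $k=0$ argument applies. The singletons are exactly the size-minimal vertices of $R'$, since any independent set of size $\ge 2$ admits a token removal staying in $R'$ whereas a singleton does not, so the task is to recover the size-grading of $R'$ from its graph structure alone. I would orient every edge from the smaller to the larger of the two incident independent sets and recognise the singletons as the sources; to reconstruct this orientation from the unlabelled graph one exploits that passing to a subset can only enlarge the set of available moves, giving a monotonicity of degrees along edges, with ties and the several connected components of $R'$ treated separately. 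This grading recovery is the one step requiring genuine structural work, and I expect it to be the main obstacle: unlike $k=0$ there is no canonical basepoint such as $\emptyset$, and $R'$ may be disconnected.

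All remaining cases are settled by degenerate examples, so it suffices to exhibit for each two non-isomorphic graphs with the same reconfiguration graph. If $\mathrm R\in\{\mathrm J,\mathrm S\}$ and $k=0$, then $\mathcal{I}^0_{\mathrm R}(G)$ is a single vertex (only $\emptyset$, with no move available) for every $G$, so any two non-isomorphic graphs agree. If $\mathrm R=\mathrm J$ and $k=1$, then $\mathcal{I}^1_{\mathrm J}(G)\cong K_{|V(G)|}$, since any token may jump to any other vertex; hence any two non-isomorphic graphs on the same number of vertices, such as $P_3$ and $K_3$, agree. Finally, if $k\ge 2$ and $\mathrm R\in\{\mathrm J,\mathrm S,\mathrm{AR}\}$, I choose two non-isomorphic graphs whose independence number is below $k$, for instance $K_2$ and $K_3$: neither has an independent set of size $\ge k$, so $\mathcal{I}^k_{\mathrm R}(K_2)$ and $\mathcal{I}^k_{\mathrm R}(K_3)$ are both the empty graph and thus isomorphic. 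Together these pairs cover every case not on the positive list, completing the negative direction.
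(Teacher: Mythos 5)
Your treatment of the cases ${\rm S}$, $k=1$ and ${\rm AR}$, $k=0$ is correct, and for the latter your route is a pleasant variant of the paper's: where the paper proves the adjacency criterion ($uv\notin E(G)$ iff $I_u$ and $I_v$ have a common neighbour other than $I$) directly for an \emph{arbitrary} maximum-degree vertex $I$, you instead observe that toggling an isolated vertex of $G$ is an automorphism of $\mathcal{I}^0_{\rm AR}(G)$ and that these toggles carry $\varnothing$ to every maximum-degree vertex, so it suffices to verify the criterion at $\varnothing$. Your negative examples also satisfy the literal statement: $\mathcal{I}^0_{\rm J}$ and $\mathcal{I}^0_{\rm S}$ are single vertices, $\mathcal{I}^1_{\rm J}$ is a clique, and for $k\ge2$ the pair $K_2$, $K_3$ has empty reconfiguration graphs under all three rules. (The paper's complete join $G\bowtie H$ with $\alpha(H)<k$ is stronger --- it yields arbitrarily large families with nonempty, even connected, reconfiguration graphs --- but existence is all the theorem asks.)

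The genuine gap is the case ${\rm AR}$, $k=1$, which is where the real content of the theorem sits, and your sketch does not close it; worse, the mechanism you propose provably fails. Recovering the size-grading of $\mathcal{I}^1_{\rm AR}(G)$ cannot be done by degree monotonicity: one computes $\deg(I)=n-|N_G(I)|$ for $|I|\ge2$ and $\deg(\{v\})=n-|N_G(v)|-1$, so degrees are only weakly monotone along edges, and the ties are not a fringe case to be ``treated separately''. For $G=\overline{C_i}$ one has $\mathcal{I}^1_{\rm AR}(G)\cong C_{2i}$, a $2$-regular vertex-transitive graph: every degree-based orientation degenerates, there are no sources, and the grading is genuinely non-unique (rotating the cycle by one step exchanges the two alternating assignments of ``singleton'' and ``pair''). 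So no local rule of the kind you describe can identify the singletons, and any correct proof must both (a) show the grading is unique outside such configurations and (b) handle them separately, proving in particular that the ambiguity is harmless because $\mathcal{I}^1_{\rm AR}(G)\cong C_{2i}$ forces $G\cong\overline{C_i}$. This is exactly the machinery the paper builds: it axiomatizes \emph{layerings} by four conditions, shows the size-grading is one (Lemma~\ref{lem:tar1}), proves by induction that it is the \emph{unique} layering unless some connected component of $\mathcal{I}^1_{\rm AR}(G)$ is an even cycle (Lemma~\ref{lem:tar2}), and disposes of the exception by showing that connected components of $\mathcal{I}^1_{\rm AR}(G)$ correspond to complete-join factors of $G$ (Lemma~\ref{lem:cc-decomposition}) and characterizing $\overline{C_i}$ as the only graph with $\mathcal{I}^1_{\rm AR}(G)\cong C_{2i}$ (Lemma~\ref{lem:cycle-reconf}). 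You candidly flag the grading recovery as the main obstacle; without analogues of these lemmas, your proposal establishes every case of the theorem except this one, and so does not prove the statement.
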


\paragraph*{Organisation of the paper}\mbox{}\\*
Each of the following three sections is devoted to one of the three topics discussed above.
We give proofs for the results mentioned and discuss examples which show the sharpness of these results.

\section{Single-vertex recolouring --- proofs and examples}
\label{section:vertex-recoloration}

We start with the proofs of Theorems~\ref{thm:chi+1-general} and~\ref{thm:algo}.
Let~$G$ be an $n$-vertex graph and let $k>\chi(G)$.
Our approach to determine~$G$ from its $k$-recolouring graph~$\Ck$ is as follows.
Firstly, for every colouring $c\in V(\Ck)$, we use the local structure of~$\Ck$ near~$c$ to construct a candidate graph~$G_c$ for~$G$.
More precisely, each~$G_c$ has a vertex~$v_C$ for each maximal clique~$C$ in the neighbourhood $N_{\Ck}(c)$ of~$c$, and an edge between two vertices~$v_C$ and~$v_{C'}$ if and only if there exist $u\in C$ and $u'\in C'$ whose unique common neighbour in~$\Ck$ is~$c$.
We then show that each of these candidate graphs is a subgraph of~$G$, and that at least one of these candidates is actually~$G$ itself.
This allows us to recognise~$G$ by selecting any candidate graph with the maximum number of vertices and edges.
Moreover, if we further suppose that $k>\min\{n,2\Delta\}$, we show that \emph{all} of these candidate graphs are isomorphic to~$G$, which allows us to determine~$G$ by selecting any colouring $c\in V(\Ck)$ and looking only at the vertices of~$\Ck$ at distance~$1$ and~$2$ from~$c$.

First we investigate the structure of neighbourhoods in~$\Ck$.
We begin with an observation already stated in~\cite{hogan2024}: each clique~$C$ of~$\Ck$ of size at least~$2$ is formed by colourings of~$G$ which differ only at a single vertex of~$G$.
We denote this vertex by~$v_C$.
Using this property, we observe that the neighbourhood of each colouring~$c$ in~$\Ck$ induces a union of~$n$ (possibly empty) cliques, each corresponding to a vertex of~$G$.
For each~$c\in V(\Ck)$ and~$u\in V(G)$, we denote by~$C_{c,u}$ the clique in~$\Ck$ containing all colourings which differ from~$c$ only at~$u$ (where~$C_{c,u}$ is empty if no such colourings exist).
For each colouring $c\in V(\Ck)$, the number of vertices in the candidate graph~$G_c$ will be the number of connected components in its neighbourhood, which is at most~$n$.

To determine the edges of~$G$, we will consider the neighbourhoods at distance~$2$ in~$\Ck$.

\begin{lemma}
\label{lem:chi+1-general}\mbox{}\\*
	Let~$c$ be a $k$-colouring of~$G$.
	If $uv\notin E(G)$, then every $c_u\in V(C_{c,u})$ and $c_v\in V(C_{c,v})$ have a common neighbour distinct from~$c$ $(\star)$.
\end{lemma}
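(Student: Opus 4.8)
The plan is to exhibit, for any given $c_u \in V(C_{c,u})$ and $c_v \in V(C_{c,v})$, an explicit common neighbour obtained by performing both single-vertex recolourings simultaneously. Write $a := c_u(u)$ (so $a \neq c(u)$, and $c_u$ agrees with $c$ at every other vertex) and $b := c_v(v)$ (so $b \neq c(v)$, and $c_v$ agrees with $c$ at every other vertex). Define the colouring $c'$ by $c'(u) = a$, $c'(v) = b$, and $c'(w) = c(w)$ for every vertex $w \notin \{u,v\}$. Informally, $c'$ is the \emph{diagonal} colouring that applies the recolouring of $u$ witnessed by $c_u$ and the recolouring of $v$ witnessed by $c_v$ at the same time.

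First I would verify that $c'$ is a proper $k$-colouring; this is the only place the hypothesis $uv \notin E(G)$ is used, and it is the crux of the argument. Since $v \notin N_G(u)$, the colourings $c'$ and $c_u$ agree on the closed neighbourhood of $u$ (both give $u$ the colour $a$, and each neighbour of $u$ keeps its $c$-colour); as $c_u$ is proper, no conflict arises at $u$, and the symmetric argument handles $v$. For a vertex $w \notin \{u,v\}$ we have $c'(w) = c(w)$, so a conflict could only involve a neighbour of $w$ whose colour changed, namely $u$ or $v$; but if $w \in N_G(u)$ then $c'(w) = c_u(w) \neq c_u(u) = c'(u)$ by properness of $c_u$, and likewise if $w \in N_G(v)$. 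Hence $c'$ is proper.

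It then remains to check the adjacencies, which is immediate from the construction: $c'$ differs from $c_u$ only at $v$ and from $c_v$ only at $u$, so $c'$ is adjacent to both $c_u$ and $c_v$ in $\Ck$. Finally $c'(u) = a \neq c(u)$ shows $c' \neq c$, so $c'$ is the desired common neighbour distinct from $c$.

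I expect the main (and essentially only) obstacle to be the properness check at the neighbours of $u$ and $v$: the point is that the non-adjacency $uv \notin E(G)$ guarantees the two recolourings do not interfere, so each potential colour clash can be certified from the already-known properness of $c_u$ or of $c_v$. Note as well that $c'$ introduces no colour outside those used by $c_u$ and $c_v$, so the construction never requires an extra colour and the argument is valid for every $k$, independently of the bound $k > \chi(G)$.
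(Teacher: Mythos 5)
Your proof is correct and is essentially identical to the paper's: the same ``diagonal'' colouring $c'$ with $c'(u)=c_u(u)$, $c'(v)=c_v(v)$, and $c'=c$ elsewhere, with properness certified from $c_u$ and $c_v$ via the non-adjacency of $u$ and $v$, and the adjacencies and $c'\neq c$ read off directly. If anything, your properness check pairs $c'$ with $c_u$ on the closed neighbourhood of $u$ (and with $c_v$ on that of $v$), which is the correct pairing --- the paper's wording has these two subscripts swapped, so your write-up is slightly more careful on this point.
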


\begin{proof}
	Given $c_u\in V(C_{c,u})$ and $c_v\in V(C_{c,v}$), define a $k$-colouring~$c'$ of~$G$ as follows: $c'(u)=c_u(u)$, $c'(v)=c_v(v)$, and $c'(w)=c(w)$ for every $w\notin\{u,v\}$.
	Note that~$c'\ne c$, and that~$c'$ differs from~$c_u$ (resp.\ from~$c_v$) only at~$v$ (resp.\ only at~$u$).
	Moreover, since~$u$ and~$v$ are not adjacent, $u$ and all of its neighbours are assigned the same colours under~$c'$ and~$c_v$, and~$v$ and all of its neighbours are assigned the same colours under~$c'$ and~$c_u$.
	Since~$c_u$ and~$c_v$ are proper colourings of~$G$, we conclude that~$c'$ is a proper colouring of~$G$.
\end{proof}

\noindent
Condition $(\star)$ in Lemma~\ref{lem:chi+1-general} gives a partial criterion to determine the edges of~$G$.
We now consider each colouring~$c$ in~$\Ck$ and construct a corresponding candidate graph~$G_c$ for~$G$ as follows.
For each $c\in V(\Ck)$, let~$G_c$ be the graph with a vertex~$v_C$ for each maximal clique~$C$ of~$N_{\Ck}(c)$, and an edge between two vertices~$v_C$ and~$v_{C'}$ if and only if the cliques~$C$ and~$C'$ do not satisfy $(\star)$ (that is, if and only if there exist $a\in C$ and $a'\in C'$ which have no common neighbour distinct from~$c$).
By Lemma~\ref{lem:chi+1-general}, each non-edge of~$G$ is also a non-edge of~$G_c$, which gives the following corollary.

\begin{corollary}
\label{cor:chi+1-general}\mbox{}\\*
	For each $k$-colouring~$c$ of~$G$, $G_c$ is a subgraph of~$G$.
\end{corollary}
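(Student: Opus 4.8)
Looking at Corollary~\ref{cor:chi+1-general}, the claim is that for each $k$-colouring $c$ of $G$, the candidate graph $G_c$ is a subgraph of $G$. Let me think about what this entails.

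The plan is to establish a vertex identification between $G_c$ and $G$, and then verify that edges of $G_c$ correspond to edges of $G$. The vertices of $G_c$ are the cliques $C$ of $N_{\Ck}(c)$; by the structural observation preceding Lemma~\ref{lem:chi+1-general}, every nonempty clique in $N_{\Ck}(c)$ is exactly some $C_{c,u}$, consisting of all colourings differing from $c$ only at the single vertex $u=v_C$ of $G$. This gives a natural injection $v_C \mapsto v_C$ from $V(G_c)$ into $V(G)$ (sending each clique to the vertex of $G$ it records). So I would first make precise that $V(G_c)$ embeds into $V(G)$ via this correspondence, noting that distinct cliques correspond to distinct vertices of $G$.

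With this vertex map fixed, the edge condition is the heart of the matter, and this is where I would invoke Lemma~\ref{lem:chi+1-general}. The definition puts an edge between $v_C$ and $v_{C'}$ in $G_c$ precisely when the cliques $C=C_{c,u}$ and $C'=C_{c,v}$ \emph{fail} condition $(\star)$, i.e.\ when there exist $a\in C$ and $a'\in C'$ with no common neighbour in $\Ck$ other than $c$. The contrapositive of Lemma~\ref{lem:chi+1-general} says exactly that if such a failing pair exists, then $uv\in E(G)$. Therefore every edge $v_Cv_{C'}$ of $G_c$ maps to an edge $uv$ of $G$. Equivalently, as the excerpt already phrases it, each non-edge of $G$ (a non-adjacent pair $u,v$) yields cliques that \emph{do} satisfy $(\star)$ by the lemma, hence no edge in $G_c$.

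Combining these two observations — an injective vertex map together with the property that every edge of $G_c$ is carried to an edge of $G$ — gives precisely that $G_c$ is isomorphic to a subgraph of $G$, which is the assertion of the corollary. The main (and only real) obstacle is making the vertex identification rigorous: one must be sure that the cliques of $N_{\Ck}(c)$ are in genuine bijection with (a subset of) the vertices of $G$, so that the lemma's statement about vertices $u,v$ of $G$ can be transported to a statement about cliques $C,C'$ of $G_c$. Once that bookkeeping is in place, the edge inclusion is an immediate translation of Lemma~\ref{lem:chi+1-general}, so I would expect the proof to be short.
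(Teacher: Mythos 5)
Your proposal is correct and takes essentially the same route as the paper: the paper proves the corollary in one line as the contrapositive of Lemma~\ref{lem:chi+1-general} (each non-edge of $G$ satisfies $(\star)$, hence is a non-edge of $G_c$), with the clique-to-vertex identification $C \mapsto v_C$ left implicit from the preceding structural observation. Your only addition is to spell out that identification explicitly, which is harmless bookkeeping rather than a different argument.
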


\noindent
What is left to show is that~$G_c$ is isomorphic to~$G$ for at least one choice of~$c$.
The argument for this will come from the following observation.

\begin{lemma}
\label{lem:exact-chi+1}\mbox{}\\*
	Let $uv\in E(G)$ and let~$c$ be a $k$-colouring of~$G$ such that there is a colour $i\in[k]$ which does not appear in $\{u,v\}\cup N(u)\cup N(v)$.
	Then $(\star)$ does not hold.
\end{lemma}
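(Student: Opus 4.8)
The plan is to exhibit explicit colourings $c_u \in V(C_{c,u})$ and $c_v \in V(C_{c,v})$ witnessing the failure of $(\star)$, i.e.\ colourings that have no common neighbour in $\Ck$ other than $c$. The choice is dictated by the hypothesis: letting $i$ be a colour absent from $N[u]\cup N[v]$, I would take $c_u$ (resp.\ $c_v$) to be the colouring obtained from $c$ by recolouring $u$ (resp.\ $v$) to $i$ and leaving all other vertices unchanged. First I would check that these are legitimate. Since $i$ appears on no vertex of $N[u]$, recolouring $u$ to $i$ yields a proper $k$-colouring, and since $i\neq c(u)$ (because $u\in N[u]$) this colouring differs from $c$ exactly at $u$, so indeed $c_u\in V(C_{c,u})$; the argument for $c_v$ is symmetric. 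In particular $C_{c,u}$ and $C_{c,v}$ are nonempty, which is the first place the missing colour $i$ is used.

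The heart of the argument is to show that $c_u$ and $c_v$ admit no common neighbour distinct from $c$. The key structural observation is that $c_u$ and $c_v$ differ from one another at exactly the two vertices $u$ and $v$: they agree with $c$ everywhere else, while $c_u(u)=i\neq c(u)=c_v(u)$ and $c_v(v)=i\neq c(v)=c_u(v)$. Now suppose $c''$ is a common neighbour; then $c''$ differs from $c_u$ at a single vertex and from $c_v$ at a single vertex. I would run a short case analysis on the vertex where $c''$ and $c_u$ disagree. If that vertex lies outside $\{u,v\}$, then $c''$ still disagrees with $c_v$ at both $u$ and $v$, hence differs from $c_v$ at three vertices, which is impossible. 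If the vertex is $u$, then $c''$ already differs from $c_v$ at $v$, so it must agree with $c_v$ at $u$, forcing $c''(u)=c(u)$ and hence $c''=c$. If the vertex is $v$, then $c''(u)=i$, so $c''$ differs from $c_v$ at $u$ and must agree with it at $v$, forcing $c''(v)=i$ as well; but then $c''(u)=c''(v)=i$ is not proper since $uv\in E(G)$. Thus the only candidate common neighbour is $c$ itself, so $(\star)$ fails.

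The main thing to get right is this final case analysis, and in particular the point that it is precisely the edge $uv$ that eliminates the last case: both witnesses want to use colour $i$, and any common neighbour attempting to reconcile them is driven to place $i$ on both endpoints of an edge. The two hypotheses therefore play complementary roles — the missing colour $i$ guarantees that $C_{c,u}$ and $C_{c,v}$ are nonempty and supplies the witnesses, while the edge $uv$ is exactly what forbids a nontrivial common neighbour. Beyond carefully bookkeeping the two-vertex difference set between $c_u$ and $c_v$, I do not anticipate any genuine technical difficulty.
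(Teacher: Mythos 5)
Your proposal is correct and follows essentially the same route as the paper: both recolour $u$ and $v$ to the unused colour $i$ to obtain witnesses $c_u, c_v$ differing at exactly $\{u,v\}$, and both conclude that any common neighbour other than $c$ would be forced to assign $i$ to both endpoints of the edge $uv$, which is impossible. Your explicit three-way case analysis merely spells out the paper's one-line observation in more detail; there is no substantive difference.
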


\begin{proof}
	Define~$c_u$ (resp.~$c_v$) as the colouring obtained from~$c$ by recolouring~$u$ (resp.~$v$) to colour~$i$.
	Note that~$c_u$ and~$c_v$ differ on exactly~$u$ and~$v$, hence any colouring $c'\ne c$ adjacent to both~$c_u$ and~$c_v$ must assign colour~$i$ to both~$u$ and~$v$.
	But no such colouring can exist since~$u$ and~$v$ are adjacent.
\end{proof}

\noindent
If we have $k>\chi(G)$, then for any $k$-colouring~$c'$ of~$G$ using only $\chi(G)$ colours, there exists a colour $i\in[k]$ completely unused by~$c'$.
Combining Corollary~\ref{cor:chi+1-general} and Lemma~\ref{lem:exact-chi+1}, we deduce that~$G_{c'}$ is isomorphic to~$G$ for any such~$c'$.
Moreover, since Corollary~\ref{cor:chi+1-general} tells us that~$G_c$ is a subgraph of~$G$ for every $c\in V(\Ck)$, we can recognise such a~$c'$ by considering the set $\{G_c \mid c\in V(\Ck)\}$ of all candidate graphs and selecting any one with a maximum number of vertices and edges.
This completes the proof of Theorem~\ref{thm:chi+1-general}.

Moreover, if we further suppose $k>\min\{n,2\Delta\}$, then the condition in Lemma~\ref{lem:exact-chi+1} is satisfied by every $k$-colouring~$c$ of~$G$.
Therefore, $G_c$ is isomorphic to~$G$ for every~$c$ in~$\Ck$ in this case.
Given a colouring~$c$ in~$\Ck$, to determine~$G_c$ we only need to consider the colourings at distance~$1$ and~$2$ from~$c$ in~$\Ck$.
Observing that any colouring has at most $(k-1)n$ neighbours in~$\Ck$, we can therefore determine~$G$ in this case by considering at most $(kn)^2$ colourings in~$\Ck$, completing the proof of Theorem~\ref{thm:algo}.

\subsection{The case \texorpdfstring{$k=\chi(G)$}{k=chi}}
\label{sub:2.1}

The situation becomes very different when $k=\chi(G)$.
First of all, in some cases (for example if~$G$ is a clique) the graph~$G$ may contain vertices whose closed neighbourhoods contain all of the available~$k$ colours under every proper $k$-colouring of~$G$, and thus can never be recoloured.
We call these vertices \emph{frozen}.
If a vertex can change colour in at least one $\chi(G)$-colouring of~$G$ we call it \emph{recolourable}.
In the introduction, we showed that if a graph contains frozen vertices, then its $\chi(G)$-recolouring graph is not unique.
In the next theorem we will show that this is not a necessary condition.

%\clearpage
\begin{manualtheorem}{\ref{thm:counterexample}}\mbox{}\\*
	For every $\chi\ge 6$ there exists an arbitrarily large family~$\mathcal{F}$ of non-isomorphic graphs, all with chromatic number~$\chi$ and without frozen vertices, such that $\mathcal{C}_\chi(G)\cong\mathcal{C}_\chi(G')$ for all $G,G'\in \mathcal{F}$.
\end{manualtheorem}

\noindent
Our construction relies on the following classical construction of Mycielski~\cite{myc1955}.
Let~$G$ be an $n$-vertex graph with vertices $v_1,\ldots,v_n$.
Then the \emph{Mycielskian $M(G)$ of~$G$} is constructed by
adding vertices $u_1,\ldots,u_n$ and~$w$, adding edges between~$w$ and each~$u_i$, and for each $v_iv_j\in E(G)$ adding the edges~$u_iv_j$ and~$v_iu_j$.
We use the notation $M^2(G)=M(M(G))$, and so on.
The essential property of this construction is that $\chi(M^k(G))=\chi(G)+k$ for every~$k$ and graph~$G$~\cite{myc1955}.

We can now give our construction to provide the examples promised in Theorem~\ref{thm:counterexample}.

\begin{construction}
\label{ex:nofrozen}\mbox{}\\*
	\textup{Let $\chi\ge 6$ and $p\in[3,\chi-3]$.
	Let~$H_0$ and~$H_3$ be any two graphs such that $\chi(H_0)<\chi-p$ and $\chi(H_3)<p$.
	Define $H_1=M^{p-3}(C_5)$ and $H_2=M^{\chi-p-3}(C_5)$, where~$C_5$ is the cycle on 5 vertices.}

	\textup{Let~$G_0$ be the graph formed by starting with the disjoint union of $H_0,H_1,H_2,H_3$ and adding all edges of the form $h_ih_{i+1}$, where $h_i\in H_i$, $h_{i+1}\in H_{i+1}$, and $i\in\{0,1,2\}$.
	Finally, let~$\mathcal{F}$ be the set of all non-isomorphic graphs that can be obtained from~$G_0$ by adding any collection of edges between~$H_0$ and~$H_3$.}

	\textup{Observe that since~$H_0$ and~$H_3$ can be chosen to be arbitrarily large,~$\mathcal{F}$ can be arbitrarily large.}
\end{construction}

\noindent
The graph~$G_0$ defined in Construction~\ref{ex:nofrozen} is sketched in Figure~\ref{fig:nofrozen}.
Gray zones indicate that all edges between the two graphs are present in~$G_0$.

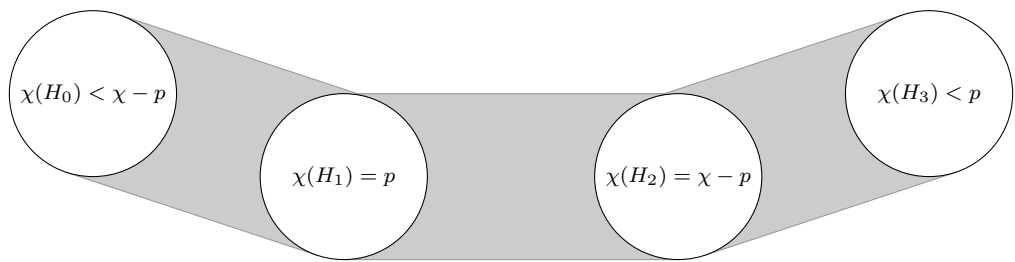
\begin{figure}[ht]
	\centering
	\begin{tikzpicture}[scale=1.1]
		\usetikzlibrary{calc}

		% Define locations of A, B, C, D
		\node (A) at (-2, 0) {};
		\node (B) at (2, 0) {};
		\node (C) at (-5,1) {};
		\node (D) at (5,1) {};

		% Technical
		\pgfmathanglebetweenpoints{\pgfpointanchor{A}{center}}{\pgfpointanchor{C}{center}}\let\angleAC\pgfmathresult
		\pgfmathanglebetweenpoints{\pgfpointanchor{B}{center}}{\pgfpointanchor{D}{center}}\let\angleBD\pgfmathresult
		\coordinate (offset0) at (0,1);
		\coordinate (offset1) at ({cos(\angleAC+90)},{sin(\angleAC+90)});
		\coordinate (offset2) at ({cos(\angleBD+90)},{sin(\angleBD+90)});

		% Shade between A and B
		\draw[white!60!black, fill=white!80!black] ($ (A) + (offset0) $) -- ($ (B) + (offset0) $) -- ($ (B) - (offset0) $) -- ($ (A) - (offset0) $) -- cycle;

		% Shade between A and C
		\draw[white!60!black, fill=white!80!black] ($ (A) + (offset1) $) -- ($ (C) + (offset1) $) -- ($ (C) - (offset1) $) -- ($ (A) - (offset1) $) -- cycle;

		% Shade between B and D
		\draw[white!60!black, fill=white!80!black] ($ (B) + (offset2) $) -- ($ (D) + (offset2) $) -- ($ (D) - (offset2) $) -- ($ (B) - (offset2) $) -- cycle;

		% Draw A, B, C, D
		\draw[fill=white, draw=black] (A) circle [radius=1];
		\draw[fill=white, draw=black] (B) circle [radius=1];
		\draw[fill=white, draw=black] (C) circle [radius=1];
		\draw[fill=white, draw=black] (D) circle [radius=1];

		% Label A, B, C, D
		\node at (A) {\scriptsize$\chi(H_1)=p$};
		\node at (B) {\scriptsize$\chi(H_2)=\chi-p$};
		\node at (C) {\scriptsize$\chi(H_0)<\chi-p$};
		\node at (D) {\scriptsize$\chi(H_3)<p$};
	\end{tikzpicture}

	\caption{The graph~$G_0$ with chromatic number~$\chi$ from Construction~\ref{ex:nofrozen}.
	Gray zones indicate complete joins.}
	\label{fig:nofrozen}
\end{figure}

The main idea of Construction~\ref{ex:nofrozen} is as follows.
The graph~$G_0$ has chromatic number~$\chi$ and no frozen vertices.
On the other hand, under each $\chi$-colouring of~$G_0$, the colours assigned to the vertices in~$H_0$ are disjoint from the colours assigned to the vertices in~$H_3$.
This latter property means that adding edges between~$H_0$ and~$H_3$ does not affect the $\chi$-recolouring graph of~$G_0$.

To prove that the set~$\mathcal{F}$ satisfies the requirement for Theorem~\ref{thm:counterexample} we need one further observation.

\begin{lemma}
\label{lem:mycielskian2}\mbox{}\\*
	Let~$G$ be a graph with no frozen vertices.
	Then the Mycielskian $M(G)$ of~$G$ has no frozen vertices.
\end{lemma}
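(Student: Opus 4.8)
The plan is to show that every vertex of $M(G)$ is \emph{recolourable}, i.e.\ that for each vertex $x$ of $M(G)$ there is a proper $(\chi+1)$-colouring of $M(G)$ whose closed neighbourhood of $x$ omits at least one colour; here $\chi:=\chi(G)$ and we use $\chi(M(G))=\chi+1$. Since $G$ has no frozen vertices, $\chi\ge 2$ (in an edgeless graph every vertex is frozen). I would treat the three kinds of vertices of $M(G)$ --- the original vertices $v_i$, the copies $u_i$, and the apex $w$ --- separately, exploiting two facts about the construction: that $\{u_1,\dots,u_n\}$ is an independent set (no edge $u_iu_j$ is ever added), and that $N_{M(G)}(u_i)=\{w\}\cup N_G(v_i)$ while $N_{M(G)}(v_i)=N_G(v_i)\cup\{u_j:v_j\in N_G(v_i)\}$, with $w$ adjacent exactly to the $u_j$'s.

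First I would introduce two families of proper $(\chi+1)$-colourings of $M(G)$, each built from a proper $\chi$-colouring $c$ of $G$ (using colours in $[\chi]$). The \emph{twin colouring} $\phi_c$ sets $\phi_c(v_i)=\phi_c(u_i)=c(v_i)$ and $\phi_c(w)=\chi+1$; this is proper since every edge $u_iv_j$ (present only when $v_iv_j\in E(G)$) joins vertices of colours $c(v_i)\ne c(v_j)$, and $w$ is the unique vertex coloured $\chi+1$. The \emph{apex colouring} $\psi$ sets $\psi(v_i)=c(v_i)$, $\psi(u_i)=\chi+1$ for every $i$, and $\psi(w)=1$; this is proper because the $u_i$'s form an independent set all coloured $\chi+1$, their $v_j$-neighbours keep colours in $[\chi]$, and $w$ avoids $\chi+1$.

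Next I would read off the closed neighbourhoods. Under $\phi_c$, the vertex $v_i$ sees only the colours of $N_G[v_i]$ (its $u_j$-neighbours carry the same colours $c(v_j)$ as its $v_j$-neighbours, and $w\notin N(v_i)$), so $N_{M(G)}[v_i]\subseteq[\chi]$ and colour $\chi+1$ is always missing; hence \emph{every} $v_i$ is recolourable, with no hypothesis on $G$ needed. For the copy $u_i$, again under $\phi_c$, the colours seen are exactly those of $N_G[v_i]$ together with $\chi+1$ (contributed by $w$). Here I would invoke that $G$ has no frozen vertices: choose $c$ so that $N_G[v_i]$ omits some colour $\beta\in[\chi]$; then $N_{M(G)}[u_i]$ omits $\beta$, so $u_i$ is recolourable. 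Finally, under $\psi$ the apex $w$ sees only the colours $\{1,\chi+1\}$ in its closed neighbourhood, which omits colour $2$ since $\chi\ge2$; hence $w$ is recolourable. This covers all vertices and completes the argument.

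The main (though mild) obstacle is the copy vertices $u_i$: unlike the $v_i$, their closed neighbourhood can a priori use all $\chi+1$ colours, and this is exactly where non-frozenness of $G$ must be transferred to $M(G)$. The key structural point making this work is that under the twin colouring $N_{M(G)}[u_i]$ sees precisely the palette of $N_G[v_i]$ plus the single extra apex colour, so a colour missing at $v_i$ in $G$ propagates to a missing colour at $u_i$ in $M(G)$. The apex $w$ falls outside this correspondence, which is why the separate colouring $\psi$ is needed.
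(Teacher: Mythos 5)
Your proof is correct and takes essentially the same approach as the paper's: you use exactly the same two colourings of $M(G)$ (the twin colouring $c^M(v_j)=c^M(u_j)=c(v_j)$, $c^M(w)=\chi+1$ to handle the vertices $v_i$ and $u_i$, and the apex colouring with every $u_j$ coloured $\chi+1$ and $w$ coloured $1$ to handle $w$). Your additional observations --- that the $v_i$ are recolourable to $\chi+1$ without any hypothesis on $G$, and that $\chi\ge 2$ (which follows from non-frozenness) is what guarantees $w$ can be recoloured --- are correct refinements of points the paper leaves implicit.
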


\begin{proof}
	As in the definition of the Mycielskian of a graph, we denote the vertices of~$G$ by $v_1,\ldots,v_n$ and the vertices of $M(G)$ by $v_1,\ldots,v_n,u_1,\ldots,u_n,w$.
	Let $\chi=\chi(G)$ and fix $i\in [n]$.
	Since~$G$ has no frozen vertices, there is a $\chi$-colouring~$c$ of~$G$ for which~$v_i$ is recolourable.
	Let~$c^M$ be the colouring of $M(G)$ where $c^M(v_j)=c^M(u_j)=c(v_j)$ for each $j\in[n]$ and $c^M(w)=\chi+1$.
	Since $\chi(M(G))=\chi(G)+1$, this colouring of $M(G)$ uses the minimum number of colours.
	Notice that~$v_i$ and~$u_i$ are recolourable in~$c^M$.
	Since $i\in[n]$ was arbitrary, none of $v_1,\ldots,v_n,u_1,\ldots,u_n$ are frozen.

	Next let~$c$ be any colouring of~$G$, and let~$c^M$ be the colouring of $M(G)$ given by $c^M(v_j)=c(v_j)$ for each $j\in[n]$, $c^M(u_j)=\chi+1$ for each $j\in[n]$, and $c^M(w)=1$.
	Then~$w$ is recolourable in~$c^M$, hence~$w$ is not frozen.
\end{proof}

\begin{proof}[Proof of Theorem \ref{thm:counterexample}]
	Let $\chi\ge6$, $p$, $H_0,H_1,H_2,H_3$, $G_0$, and~$\mathcal{F}$ be as in Construction~\ref{ex:nofrozen}.
	As observed already, $\mathcal{F}$ can be made arbitrarily large since~$H_0$ and~$H_3$ can be made arbitrarily large.

    Since $\chi(C_5)=3$, for $H_1,H_2$ we have $\chi(H_1)=\chi(M^{p-3}(C_5))=p$ and $\chi(H_2)=\chi(M^{\chi-p-3}(C_5))=\chi-p$.
    Hence every $G\in\mathcal{F}$ has chromatic number~$\chi$ by construction.

	We next prove that for each $G\in\mathcal{F}$, all vertices of~$G$ are recolourable.
	Observe that the cycle~$C_5$ of length~$5$ has no frozen vertices.
	Hence $H_1=M^{p-3}(C_5)$ and $H_2=M^{\chi-p-3}(C_5)$ have no frozen vertices by Lemma~\ref{lem:mycielskian2}.

	Fix $h_1\in H_1$ and let $c_1:V(H_1)\to [p]$ be a \mbox{$p$-colouring} such that~$h_1$ is recolourable to some $i\in [p]$.
	Fix $h_2\in H_2$ and let $c_2:V(H_2)\to [p+1,\chi]$ be a $(\chi-p)$-colouring such that~$h_2$ is recolourable to some $j\in [p+1,\chi]$.
	Consider any $(\chi-p-1)$-colouring $c_0:V(H_0)\to [p+1,\chi-1]$ of~$H_0$ and any $(p-1)$-colouring $c_3:V(H_3)\to [p-1]$ of~$H_3$.
	Let~$G$ be a graph in~$\mathcal{F}$ (hence $V(G)=V(H_0)\cup\cdots\cup V(H_3)$) and form the $\chi$-colouring~$c$ of~$G$ by copying $c_0,\ldots,c_3$ for the different parts of~$G$.

	Note that no matter what edges there are between~$H_0$ and~$H_3$ in~$G$, every vertex in~$H_0$ is recolourable to~$\chi$ and every vertex in~$H_3$ is recolourable to~$p$.
	Moreover, $h_1$ and~$h_2$ are recolourable to~$i$ and~$j$, respectively.
	Since~$h_1$ and~$h_2$ were arbitrary elements of~$H_1$ and~$H_2$ respectively, we have shown that all vertices of any $G\in\mathcal{F}$ are recolourable.

	It remains to prove that every graph in~$\mathcal{F}$ has the same $\chi$-recolouring graph.
	We use the following claim.

	\begin{claim}
		Let~$G$ be a graph with chromatic number~$\chi$ and suppose $uv\notin E(G)$ satisfies $c(u)\ne c(v)$ for all $\chi$-colourings~$c$ of~$G$.
		Let~$G'$ be the graph formed from~$G$ by adding the edge $uv$.
		Then $\mathcal{C}_\chi(G)\cong\mathcal{C}_\chi(G')$.
	\end{claim}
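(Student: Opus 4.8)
The plan is to show that $\mathcal{C}_\chi(G)$ and $\mathcal{C}_\chi(G')$ are in fact the \emph{same} graph, so that the identity map on colourings serves as the desired isomorphism. Since adjacency in a recolouring graph is defined purely in terms of two colourings differing at a single vertex---a condition on the colourings as functions $V\to[\chi]$, independent of the edge set of the underlying graph---it suffices to prove that $G$ and $G'$ have exactly the same set of $\chi$-colourings. Once the vertex sets are shown to coincide, the edge sets coincide automatically.

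First I would establish the inclusion of $\chi$-colourings of $G'$ into those of $G$. This direction is immediate: since $E(G)\subseteq E(G')$, any proper colouring of $G'$ is in particular proper on the smaller edge set $E(G)$, so every $\chi$-colouring of $G'$ is a $\chi$-colouring of $G$. Next I would prove the reverse inclusion, which is where the hypothesis enters. Let $c$ be any $\chi$-colouring of $G$. Because $G'$ is obtained from $G$ by adding only the single edge $uv$, checking that $c$ is proper for $G'$ reduces to checking that $c(u)\neq c(v)$; but this is guaranteed for every $\chi$-colouring of $G$ precisely by the assumption on the pair $\{u,v\}$. Hence every $\chi$-colouring of $G$ is also a $\chi$-colouring of $G'$, and the two colouring sets are equal.

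With the vertex sets identified, I would conclude by observing that two colourings $c_1,c_2$ lying in this common set are joined by an edge in $\mathcal{C}_\chi(G)$ if and only if they differ at exactly one vertex, which is literally the same criterion defining an edge in $\mathcal{C}_\chi(G')$. Therefore the edge sets agree as well, giving $\mathcal{C}_\chi(G)=\mathcal{C}_\chi(G')$ and in particular $\mathcal{C}_\chi(G)\cong\mathcal{C}_\chi(G')$.

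The argument is essentially a bookkeeping verification rather than a deep one, and the only genuinely load-bearing step---the point I would state most carefully---is the reverse inclusion of colouring sets, since it is the unique place where the hypothesis that $c(u)\neq c(v)$ holds across \emph{all} $\chi$-colourings is needed. Without that assumption, adding the edge $uv$ could destroy some $\chi$-colourings of $G$, shrinking the vertex set and breaking the isomorphism.
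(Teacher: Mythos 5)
Your proposal is correct and follows essentially the same route as the paper's proof: both establish that the $\chi$-colourings of $G$ and $G'$ coincide (the easy inclusion from $E(G)\subseteq E(G')$, the reverse from the hypothesis $c(u)\neq c(v)$) and then note that the edge relation ``differ at exactly one vertex'' depends only on the colourings, so the two recolouring graphs are literally equal. The only cosmetic difference is that the paper also records the (easy, and for the claim itself inessential) fact that $\chi(G')=\chi(G)$.
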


	\begin{proof}[Proof of claim]
		Clearly $\chi(G')=\chi(G)$ and $\mathcal{C}_\chi(G')$ is a subgraph of $\mathcal{C}_\chi(G)$.
		It remains to show that $\mathcal{C}_\chi(G)$ is also a subgraph of $\mathcal{C}_\chi(G')$.
		Let $c\in V(\mathcal{C}_\chi(G))$.
		Then $c(u)\ne c(v)$, so~$c$ is a proper colouring of~$G'$, hence $c\in V(\mathcal{C}_\chi(G'))$, which gives $V(\mathcal{C}_\chi(G))\subseteq V(\mathcal{C}_\chi(G'))$.
		Now let $c_1c_2\in E(\mathcal{C}_\chi(G))$.
		Then~$c_1$ and~$c_2$ are $\chi$-colourings of~$G$ which differ at some vertex~$w$ of~$G$ and nowhere else, and so (since $V(\mathcal{C}_\chi(G))\subseteq  V(\mathcal{C}_\chi(G'))$), $c_1$ and~$c_2$ are $\chi$-colourings of~$G'$ which differ at some vertex~$w$ of~$G'$ and nowhere else.
		This means $c_1c_2\in E(\mathcal{C}_\chi(G'))$, hence  $E(\mathcal{C}_\chi(G))\subseteq E(\mathcal{C}_\chi(G'))$ as required.
	\end{proof}

	\noindent
	Let~$c$ be a $\chi$-colouring of~$G_0$.
	For $i\in\{1,2\}$, let~$A_i$ be the set of colours assigned to the vertices of~$H_i$ by~$c$, \textit{i.e.}\ $A_i=\{j\in [\chi]\mid \exists\, w\in V(H_i),\ c(w)=j\}$.
	Observe that $A_1\cap A_2=\varnothing$, since every vertex of~$H_1$ is adjacent to every vertex of~$H_2$.
	We also have $| A_2|\ge \chi-p$, since~$H_1$ and~$H_2$ have chromatic numbers~$p$ and $\chi-p$ respectively.
	This gives $A_1\cup A_2=[\chi]$, $|A_1|=p$, and $|A_2|=\chi-p$.
	Let $u\in H_0$ and $v\in H_3$.
	Then $c(u)\notin A_1$, since~$u$ is adjacent to every vertex in~$H_1$; hence $c(u)\in A_2$.
	Similarly  we have $c(v)\in A_1$.
	This gives $c(u)\ne c(v)$.

	Hence, under every $\chi$-colouring of~$G_0$, every vertex of~$H_0$ is assigned a different colour to every vertex of~$H_3$.
	By the claim above, this means that any graph~$G$ formed from~$G_0$ by adding edges between vertices of~$H_0$ and~$H_3$ will have the same $\chi$-recolouring graph as~$G_0$, completing the proof of the theorem.
\end{proof}

\section{Kempe recolouring --- proofs and examples}
\label{sec:kempe}

For a $k$-colouring~$c$ of~$G$ and $u\in V(G)$, we denote by~$C_{c,u}$ the set of colourings in~$\KCk$ obtained from~$c$ by a trivial Kempe swap on~$u$.
The trivial Kempe swaps correspond to the single-vertex recolouring operation discussed in Section~\ref{section:vertex-recoloration}, therefore~$C_{c,u}$ is again a clique for every $u\in V(G)$.
We denote by~$D_c$ the set of other neighbours of~$c$ in~$\KCk$, \textit{i.e.}\ those vertices corresponding to colourings of~$G$ obtained from~$c$ by a non-trivial Kempe swap.

Similarly to the case of single-vertex recolouring, we first study the graph induced by the neighbourhood of each colouring~$c$ in~$\KCk$.
We show that, as before, each connected component in each such graph is a clique, and that any such clique which was not already in the neighbourhood of~$c$ in~$\Ck$ is an isolated vertex.

\begin{lemma}
\label{lm:cliques-Kempe}\mbox{}\\*
	For any $k$-colouring~$c$ of~$G$ and distinct $u,v\in V(G)$, there is no edge in~$\KCk$ between~$C_{c,u}$ and~$C_{c,v}$.
\end{lemma}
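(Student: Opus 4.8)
The plan is to argue by contradiction, exploiting the fact that a single Kempe swap recolours a highly structured set of vertices. Suppose that there were an edge in $\KCk$ joining some $c_u\in C_{c,u}$ to some $c_v\in C_{c,v}$. Since $c_u$ is obtained from $c$ by a trivial Kempe swap on $u$ and $c_v$ by a trivial Kempe swap on $v$, and since $u\neq v$, both colourings agree with $c$ everywhere except that $c_u(u)\neq c(u)$ and $c_v(v)\neq c(v)$. Consequently $c_u$ and $c_v$ differ at exactly the two vertices $u$ and $v$.

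The first substantive step is to recall the structure of a Kempe swap. The putative edge $c_uc_v$ corresponds to choosing two colours $i,j$ and swapping them on a single connected component $K$ of the subgraph of $G$ induced by the vertices coloured $i$ or $j$ under $c_u$. Every vertex of $K$ is coloured $i$ or $j$, so each of them genuinely changes colour; hence the set of vertices on which $c_u$ and $c_v$ differ is exactly $K$. Combining this with the previous paragraph forces $K=\{u,v\}$. As $K$ is a \emph{connected} component consisting of two vertices, $u$ and $v$ must be adjacent in $G$.

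The contradiction then follows immediately. Since $K=\{u,v\}$ is the swapped component and $u,v$ carry the two colours $i,j$ between them, the swap merely exchanges their colours, so $c_v(u)=c_u(v)$. But $c_v(u)=c(u)$ (because $c_v$ differs from $c$ only at $v$) and $c_u(v)=c(v)$ (because $c_u$ differs from $c$ only at $u$), whence $c(u)=c(v)$. This contradicts the fact that $u$ and $v$ are adjacent and $c$ is a proper colouring, completing the argument.

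The only genuinely delicate point, and the step I would set up carefully, is the structural claim that the vertices recoloured by a single Kempe swap are precisely one connected component of a two-colour subgraph; this is exactly what lets us pin down the recoloured set as $\{u,v\}$ and thereby deduce $uv\in E(G)$. Once that is in place, everything reduces to comparing the colours of $u$ and $v$ and is entirely routine.
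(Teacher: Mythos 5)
Your proof is correct and takes essentially the same approach as the paper's: both identify the set where $c_u$ and $c_v$ differ as exactly $\{u,v\}$, observe that this set must be an entire connected component of the two-colour subgraph (forcing $uv\in E(G)$), and use the swap identity $c_v(u)=c_u(v)$ to deduce $c(u)=c(v)$, yielding the contradiction. The only difference is the order of the two deductions (the paper extracts $c(u)=c(v)$, hence $uv\notin E(G)$, before invoking connectivity), and your ordering, if anything, makes the implicit use of $c_u(u)\neq c_u(v)$ slightly more transparent.
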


\begin{proof}
	Let $c_u\in C_{c,u}$ and $c_v\in C_{c,v}$.
	Suppose $c_uc_v\in E(\KCk)$.
	Then we can move from~$c_u$ to~$c_v$ via a Kempe swap.
	Since $c_u$ and $c_v$ differ on $u$ and $v$ and agree on all other vertices of $G$, this Kempe swap affects~$u$ and~$v$ only.
	This means that $\{u,v\}$ must induce a connected subgraph of~$G$, and hence $uv\in E(G)$.
	On the other hand, we have $c_v(u)=c(u)$ and $c_u(v)=c(v)$.
	And after the Kempe swap from~$c_u$ to~$c_v$,~$u$ has colour~$c_u(v)$, \textit{i.e.}\ we have $c_v(u)=c_u(v)$.
	But this means that $c(u)=c(v)$, hence $uv\notin E(G)$, a contradiction.
\end{proof}

\begin{lemma}
\label{lm:independent}\mbox{}\\*
	For any $k$-colouring~$c$ of~$G$ and $u\in V(G)$, $D_c$ is an independent set of~$\KCk$ with no neighbour in~$C_{c,u}$.
\end{lemma}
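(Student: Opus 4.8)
Both assertions combine neatly. Since the neighbours of $c$ in $\KCk$ are exactly the colourings reachable by a single Kempe swap, we have $N_{\KCk}(c)=\bigl(\bigcup_{w\in V(G)}C_{c,w}\bigr)\cup D_c$. Hence it suffices to prove the single statement that every $d\in D_c$ is \emph{isolated} in the subgraph induced by $N_{\KCk}(c)$; this gives at once that $D_c$ is independent and that $d$ has no neighbour in any $C_{c,u}$. Throughout I would use the following description of adjacency: two $k$-colourings $\alpha,\beta$ are adjacent in $\KCk$ if and only if their disagreement set $\Delta(\alpha,\beta)=\{w: \alpha(w)\ne\beta(w)\}$ is nonempty, is entirely coloured by $\alpha$ with only two colours $\{x,y\}$, is a connected component of the subgraph of $G$ induced by the vertices $\alpha$-coloured $x$ or $y$, and $\beta$ is obtained from $\alpha$ by swapping $x$ and $y$ on $\Delta(\alpha,\beta)$. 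Fix $d\in D_c$: by definition $d$ is obtained from $c$ by swapping two colours $a,b$ on the vertex set $S$ of a connected component of the $\{a,b\}$-subgraph under $c$, with $|S|\ge2$; since this component is connected with at least two vertices, both $a$ and $b$ occur on $S$, and $G[S]$ is connected.

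I would first treat a trivial neighbour $c_u\in C_{c,u}$, which recolours $u$ from $c(u)$ to some $i\ne c(u)$. Assuming $d\sim c_u$, I compute $\Delta(d,c_u)$ and split on whether $u\in S$. If $u\in S$, then properness of $c_u$ forces $i\notin\{a,b\}$ (recolouring $u$ to the colour of its $S$-neighbour would create a conflict); choosing an $S$-neighbour $s$ of $u$, both $u,s$ lie in $\Delta(d,c_u)$ and are adjacent, so the two swapped colours must be $\{c_u(u),c_u(s)\}=\{i,b\}$, yet swapping these would send $d(s)$ to $i$ whereas in fact $d(s)=a\ne i$, a contradiction. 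If $u\notin S$, then $\Delta(d,c_u)=S\cup\{u\}$; using an edge of $S$ forces the swapped colours to be $\{a,b\}$, so $i\in\{a,b\}$ and then $c(u)$ must also lie in $\{a,b\}$; but an $\{a,b\}$-coloured vertex outside the maximal component $S$ is non-adjacent to $S$, so $S\cup\{u\}$ is disconnected, contradicting that a Kempe component is connected.

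For the independence of $D_c$ I would take $d_1,d_2\in D_c$, arising from swaps of colour pairs $\{a_1,b_1\}$ and $\{a_2,b_2\}$ on sets $S_1,S_2$, and assume $d_1\sim d_2$ with disagreement set $T$ that is $\{x,y\}$-coloured and connected under $d_1$. I split according to the intersection of the two colour pairs. If the pairs coincide, then $S_1,S_2$ are distinct components of the same two-colour subgraph, hence vertex-disjoint with no edges between them, so $T=S_1\cup S_2$ is disconnected --- impossible. If the pairs are disjoint, then $S_1\cap S_2=\varnothing$, and one checks $S_1\subseteq T$ and $S_2\subseteq T$ with $d_1$-colours lying in $\{a_1,b_1\}$ and $\{a_2,b_2\}$ respectively, forcing the two disjoint pairs both to equal $\{x,y\}$, a contradiction.

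The remaining case, where the two colour pairs share exactly one colour, is the crux. Here $S_1$ and $S_2$ may overlap (precisely on the vertices carrying the shared colour), and the colour changes compose in a way that requires careful tracking. The plan is to examine an edge of $S_1$ to pin down $\{x,y\}=\{a_1,b_1\}$, and then to exhibit a vertex $t'\in S_2$ whose $c$-colour is the colour of $\{a_2,b_2\}$ not shared with $\{a_1,b_1\}$; such $t'$ lies outside $S_1$, so $d_1(t')=c(t')$ equals this third colour, yet $t'\in T$ because $d_2$ moves it --- contradicting that $T$ is $\{x,y\}=\{a_1,b_1\}$-coloured under $d_1$. I expect this shared-colour case to be the main obstacle: the composition of two overlapping Kempe swaps does not reduce to a single two-colour swap, and making this precise demands a careful inventory of which vertices change colour and to what value. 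Once it is resolved, every $d\in D_c$ is seen to be isolated in $N_{\KCk}(c)$, which establishes both parts of the lemma.
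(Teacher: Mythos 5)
Your proposal is correct and follows essentially the same route as the paper: both proofs compare the colour pairs of the two swaps producing the neighbours, showing that unequal pairs force at least three colours to appear on the disagreement set (this is what your trivial-neighbour and disjoint/shared-pair cases amount to), while equal pairs make the disagreement set a union of two distinct Kempe components, hence disconnected. The shared-colour case you flag as a likely obstacle is in fact already closed by your own sketch: on $S_1\cap S_2$ the two swaps send the shared colour to $a_1$ and to the third colour respectively, so $T=S_1\cup S_2$, every edge of $S_1$ pins down $\{x,y\}=\{a_1,b_1\}$, and the vertex $t'\in S_2$ carrying the third colour yields the contradiction exactly as you describe.
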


\begin{proof}
	Let $c_1\in D_c$ and $c_2\in D_c\cup C_{c,u}$, where $c_1\neq c_2$.
	Let $S_1$, resp.\ $S_2$, denote the set of colours involved in the Kempe swap performed between $c$ and $c_1$, resp.\ between~$c$ and~$c_2$ (in the case where $c_2\in C_{c,u}$, this means $S_2=\{c(u),c_2(u)\}$).
	Then $|S_1|=|S_2|=2$.
	If $|S_1\cup S_2|\geq 3$, then at least three colours appear on the set of vertices where~$c_1$ and~$c_2$ differ, hence we cannot move from~$c_1$ to~$c_2$ via a single Kempe swap.
	So we have $S_1=S_2$.
	Suppose $c_2\in D_c$.
	Then, since $c_1\ne c_2$, the subgraph of~$G$ induced by the set of vertices where~$c_1$ and~$c_2$ differ consists of exactly two connected components.
	Hence we cannot move from~$c_1$ to~$c_2$ via a Kempe swap in this case.
	So we have $c_2\in C_{c,u}$.
	Set $i\coloneqq c_2(u)$, and let $S_1=S_2=\{i,j\}$.
	Let~$A$ denote the set of vertices of~$G$ that change colour in the Kempe swap from~$c$ to~$c_1$.
	Since $|A|\ge2$, every $v\in A$ either satisfies $c(v)=i$, or has a neighbour $w\in A$ such that $c(w)=i$.
	Therefore we must have $u\notin A$, since moving from~$c$ to~$c_2$ changes the colour of~$u$ to~$i$.
	Since $S_1=S_2$, this means that there is no edge between~$u$ and~$A$.
	Therefore, the subgraph of~$G$ induced by the set of vertices where~$c_1$ and~$c_2$ differ consists of exactly two connected components, hence we cannot move from~$c_1$ to~$c_2$ via a Kempe swap in this case either.
\end{proof}

\begin{lemma}
\label{lm:are-trivial}\mbox{}\\*
	Let~$c$ be a $k$-colouring of~$G$, let $u, v\in V(G)$ be such that $|C_{c,u}|,|C_{c,v}|\ge2$, and let $c_u\in C_{c,u}$ and $c_v\in C_{c,v}$.
	If $uv \notin E(G)$, then $|C_{c_u,v}|\ge2$ and $|C_{c_v,u}|\ge2$.
\end{lemma}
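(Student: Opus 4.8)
The plan is to prove the bound $|C_{c_u,v}|\ge 2$; the bound $|C_{c_v,u}|\ge 2$ then follows immediately by exchanging the roles of $u$ and $v$ (and of $c_u$ and $c_v$). First I would record the elementary count describing a set of the form $C_{c,u}$: a trivial Kempe swap recolours $u$ to any colour that is distinct from $c(u)$ and does not appear on $N(u)$, so the colourings in $C_{c,u}$ are in bijection with the colours in $[k]\setminus c(N[u])$, where $N[u]=N(u)\cup\{u\}$ is the closed neighbourhood of $u$. In particular $|C_{c,u}|=k-|c(N[u])|$, which depends on $c$ only through the palette $c(N[u])$ of colours present on the closed neighbourhood of $u$.

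The key observation is that passing from $c$ to $c_u$ leaves this palette untouched \emph{as seen from} $v$. Indeed, since $uv\notin E(G)$ and $u\ne v$, the vertex $u$ lies outside $N[v]$, while $c_u$ agrees with $c$ at every vertex other than $u$. Hence $c_u(N[v])=c(N[v])$ (and in particular $c_u(v)=c(v)$, so the colourings counted by $C_{c_u,v}$ are recolourings of $v$ from the same base colour). Applying the count from the previous step to the colouring $c_u$ and the vertex $v$ then gives
$|C_{c_u,v}|=k-|c_u(N[v])|=k-|c(N[v])|=|C_{c,v}|\ge 2$,
as desired. By symmetry we also obtain $|C_{c_v,u}|=|C_{c,u}|\ge 2$.

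The whole content of the lemma is thus concentrated in the non-adjacency hypothesis $uv\notin E(G)$: it is exactly what guarantees that modifying the colour of $u$ is invisible to $v$, so that the set of trivial recolourings available at $v$ is literally the same before and after the swap at $u$. I do not expect a genuine obstacle here; the only point requiring care is to confirm that the colourings counted by $C_{c_u,v}$ really are the \emph{same} recolourings of $v$ as those counted by $C_{c,v}$, which is immediate once $c_u(N[v])=c(N[v])$ is established. Note finally that the hypothesis $|C_{c,u}|\ge 2$ is used only to guarantee that a choice of $c_u$ exists (and symmetrically for $c_v$), while the conclusion $|C_{c_u,v}|\ge 2$ rests on $|C_{c,v}|\ge 2$; both assumptions are therefore genuinely needed.
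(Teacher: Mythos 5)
Your proof is correct and follows essentially the same route as the paper's: the paper likewise observes that the colours available to recolour $v$ under $c$ remain available under $c_u$ because $c$ and $c_u$ differ only at $u$, which is not adjacent to $v$. Your version is merely a more quantitative rendering of this via the identity $|C_{c,u}|=k-|c(N[u])|$, yielding the (slightly stronger) equality $|C_{c_u,v}|=|C_{c,v}|$ rather than just the inequality $|C_{c_u,v}|\ge 2$.
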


\begin{proof}
	Since $|C_{c,v}|\ge2$, there are at least two colours available to recolour~$v$ in~$c$.
	However, $c$ and~$c_u$ only differ on~$u$, which we assume is not adjacent to~$v$.
	Therefore we can also recolour~$v$ in~$c_u$ by these two colours, hence $|C_{c_u,v}|\ge2$.
	A similar argument shows that $|C_{c_v,u}|\ge2$.
\end{proof}

\begin{lemma}
\label{lm:chi+1-general-Kempe}\mbox{}\\*
	Let~$c$ be a $k$-colouring of~$G$.
	If $uv \notin E(G)$, then for every $c_u\in C_{c,u}$ and $c_v\in C_{c,v}$, we have $C_{c_u,v} \cap C_{c_v,u}\ne \varnothing$.
\end{lemma}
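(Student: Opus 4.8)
The plan is to reuse the explicit common colouring already constructed in the proof of Lemma~\ref{lem:chi+1-general}, and then to verify that in the Kempe setting this colouring lands inside both of the trivial-swap cliques $C_{c_u,v}$ and $C_{c_v,u}$.

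First, given $c_u \in C_{c,u}$ and $c_v \in C_{c,v}$ (we may assume both sets are nonempty, as otherwise there is nothing to prove), I would define a colouring $c'$ of $G$ by setting $c'(u)=c_u(u)$, $c'(v)=c_v(v)$, and $c'(w)=c(w)$ for every $w\notin\{u,v\}$. Exactly as in Lemma~\ref{lem:chi+1-general}, the non-adjacency of $u$ and $v$ guarantees that $c'$ is proper: every neighbour of $u$ receives the same colour under $c'$ and under $c_u$, and every neighbour of $v$ receives the same colour under $c'$ and under $c_v$, so since $c_u$ and $c_v$ are proper, no monochromatic edge is created.

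Next I would establish the two membership claims. Because $c_u$ agrees with $c$ at $v$ while $c_v$ changes the colour of $v$, we have $c'(v)=c_v(v)\neq c(v)=c_u(v)$, so $c'$ differs from $c_u$ at $v$ and nowhere else. A trivial Kempe swap on $v$ is precisely a single-vertex recolouring of $v$, so this yields $c'\in C_{c_u,v}$. By the symmetric computation, $c'(u)=c_u(u)\neq c(u)=c_v(u)$, so $c'$ differs from $c_v$ only at $u$, giving $c'\in C_{c_v,u}$. Hence $c'\in C_{c_u,v}\cap C_{c_v,u}$, as required.

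I do not expect a genuine obstacle here, since the construction is identical to the single-vertex case; the only extra work is the bookkeeping that checks $c'$ is reached from each of $c_u$ and $c_v$ by a single recolouring (equivalently, a trivial Kempe swap) on the correct vertex. The one point worth stating with care is that $c'$ is \emph{distinct} from both $c_u$ and $c_v$ — which follows from $c_v(v)\neq c(v)$ and $c_u(u)\neq c(u)$ — so that $c'$ genuinely lies inside the trivial-swap cliques rather than coinciding with their base colouring.
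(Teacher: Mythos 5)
Your proof is correct and follows essentially the same route as the paper, whose entire proof is a one-line appeal to Lemma~\ref{lem:chi+1-general}: the colouring $c'$ you build is exactly the one constructed there, and your verification that $c'$ lies in $C_{c_u,v}\cap C_{c_v,u}$ (rather than merely being a common neighbour distinct from $c$) makes explicit what the paper's citation leaves implicit. If anything, your version is slightly more careful, since the statement of Lemma~\ref{lem:chi+1-general} alone only asserts a common neighbour, and the intersection claim genuinely needs the specific $c'$ from its proof, as you check.
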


\begin{proof}
	The transitions between~$c$ and~$c_u$ and between~$c$ and~$c_v$ are trivial Kempe swaps, thus by Lemma~\ref{lem:chi+1-general}, $C_{c_u,v} \cap C_{c_v,u}$ is non-empty.
\end{proof}

\noindent
Given a $k$-colouring~$c$ of~$G$ and $u,v\in V(G)$, $u\ne v$, such that $|C_{c,u}|,|C_{c,v}|\ge2$, we say that $c_u\in C_{c,u}$ and $c_v\in C_{c,v}$ satisfy condition $(\star\star)$ if~$c_u$ and~$c_v$ have a common neighbour~$x$ other than~$c$, and there exist colourings $y_1,y_2$ such that $\{x,c_u,y_1\}$ and $\{x,c_v,y_2\}$ both induce triangles in~$\KCk$.

The following lemma tells us that condition $(\star\star)$ being satisfied is equivalent to the conclusions of Lemmas~\ref{lm:are-trivial} and~\ref{lm:chi+1-general-Kempe} both holding.

\begin{lemma}
\label{lm:kempecriterion}\mbox{}\\*
	Let~$c$ be a $k$-colouring of~$G$ and let $u,v\in V(G)$, $u\ne v$, be such that $|C_{c,u}|,|C_{c,v}|\ge2$.
	Then $c_u\in C_{c,u}$ and $c_v\in C_{c,v}$ satisfy $(\star\star)$ if and only if $|C_{c_u,v}|\ge2$, $|C_{c_v,u}|\ge2$, and $C_{c_u,v} \cap C_{c_v,u}\ne \varnothing$.
\end{lemma}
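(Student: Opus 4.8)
The plan is to prove the two implications separately, relying throughout on the description of neighbourhoods that follows from Lemmas~\ref{lm:cliques-Kempe} and~\ref{lm:independent}: for \emph{any} colouring $c'$, the graph induced by $N_{\KCk}(c')$ is the disjoint union of the trivial-swap cliques $C_{c',w}$ (one for each $w\in V(G)$) together with the independent set $D_{c'}$ of non-trivial neighbours, and there are no edges between any two of these pieces. The consequence I will use repeatedly is that whenever two vertices of $N_{\KCk}(c')$ are adjacent to one another they must lie in a common clique $C_{c',w}$; in particular, any triangle through $c'$ has its other two vertices inside a single $C_{c',w}$, which therefore has size at least $2$. I will also record the elementary fact that $c_u$ and $c_v$ differ in exactly the two vertices $u$ and $v$: they agree with $c$ off $u$ and off $v$ respectively, while $c_u(u)\neq c(u)=c_v(u)$ and $c_v(v)\neq c(v)=c_u(v)$.

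For the forward implication, suppose $c_u$ and $c_v$ satisfy $(\star\star\star)$, with common neighbour $x\neq c$ and witnesses $c_u',c_v'$. Applying the observation above at $c_u$ to the triangle $\{x,c_u,c_u'\}$ shows that $x$ and $c_u'$ lie in a common clique $C_{c_u,w_1}$, so that $x\in C_{c_u,w_1}$ and $|C_{c_u,w_1}|\ge 2$; symmetrically $x\in C_{c_v,w_2}$ with $|C_{c_v,w_2}|\ge 2$. It then remains to identify $w_1=v$ and $w_2=u$. Since $x$ agrees with $c_u$ off $w_1$ and with $c_v$ off $w_2$, while $c_u$ and $c_v$ differ precisely at $u$ and $v$, comparing the colours at $u$ and at $v$ forces $u,v\in\{w_1,w_2\}$ and hence $\{w_1,w_2\}=\{u,v\}$. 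The remaining possibility $w_1=u,\,w_2=v$ would make $x$ agree with $c$ at every vertex, and is excluded exactly because $x\neq c$. Thus $w_1=v$ and $w_2=u$, which yields $x\in C_{c_u,v}\cap C_{c_v,u}$ together with $|C_{c_u,v}|\ge 2$ and $|C_{c_v,u}|\ge 2$, i.e.\ the right-hand side.

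For the converse, assume $|C_{c_u,v}|,|C_{c_v,u}|\ge 2$ and that $C_{c_u,v}\cap C_{c_v,u}\neq\varnothing$, and pick $x$ in this intersection. Unwinding the two memberships shows that $x$ is the colouring that agrees with $c$ off $u$ and $v$, recolours $u$ as $c_u$ does, and recolours $v$ as $c_v$ does; in particular $x\neq c$, and $x$ is a common neighbour of $c_u$ and $c_v$ reached by trivial swaps. Because $|C_{c_u,v}|\ge 2$, the clique $C_{c_u,v}$ contains a further colouring $c_u'\neq x$, and since $c_u$ together with $C_{c_u,v}$ forms a clique, $\{x,c_u,c_u'\}$ is a triangle whose vertex $c_u'$ is a neighbour of $c_u$; choosing $c_v'$ analogously in $C_{c_v,u}$ supplies the second triangle, so $(\star\star\star)$ holds.

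The step I expect to be the crux is the colour bookkeeping in the forward direction that pins down $\{w_1,w_2\}=\{u,v\}$ and discards the degenerate assignment through the hypothesis $x\neq c$; this is where the fact that $c_u,c_v$ differ in exactly two vertices does the real work. A secondary point to handle carefully is the clean passage, in both directions, between genuine triangles of $\KCk$ and membership in the trivial-swap cliques $C_{c',w}$; this is precisely where the absence of edges between the distinct pieces of a neighbourhood, furnished by Lemmas~\ref{lm:cliques-Kempe} and~\ref{lm:independent}, is indispensable.
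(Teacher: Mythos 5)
Your proof is correct and is essentially the paper's own argument: the paper's two-sentence proof invokes Lemma~\ref{lm:independent} (implicitly together with Lemma~\ref{lm:cliques-Kempe}) for precisely the neighbourhood decomposition you use, so that in the forward direction the triangles force $x,c_u'$ and $x,c_v'$ into single trivial-swap cliques, which the colour bookkeeping you spell out (in particular the exclusion of the degenerate assignment via $x\neq c$) identifies as $C_{c_u,v}$ and $C_{c_v,u}$, while in the converse direction the paper chooses distinct witnesses $x\in C_{c_u,v}\cap C_{c_v,u}$, $c_u'\in C_{c_u,v}$, $c_v'\in C_{c_v,u}$ exactly as you do. One point you leave unaddressed is shared with the paper: the clause of $(\star\star\star)$ requiring $c_u'$ and $c_v'$ to be adjacent to $c$ is verified in neither proof---and indeed the chosen $c_u'\in C_{c_u,v}$ differs from $c$ at both $u$ and $v$ and is in general at distance $2$ from $c$ in $\KCk$---so that clause is evidently a slip in the definition and should be read as merely asserting the existence of $c_u'$ and $c_v'$ (the forward direction is unaffected, since an extra hypothesis only helps there).
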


\begin{proof}
	If $(\star\star)$ holds, then by Lemmas~\ref{lm:cliques-Kempe} and~\ref{lm:independent} we must have $x,y_1\in C_{c_u,v}$ and $x,y_2\in C_{c_v,u}$.
	Conversely, if $|C_{c_u,v}|\ge2$, $|C_{c_v,u}|\ge2$, and $C_{c_u,v} \cap C_{c_v,u}\ne \varnothing$, then $(\star\star)$ holds by choosing any $x\in C_{c_u,v} \cap C_{c_v,u}$, $y_1\in C_{c_u,v}\backslash\{x\}$, and $y_2\in C_{c_v,u}\backslash\{x\}$.
\end{proof}

\noindent
We can now adapt the proof of Theorem~\ref{thm:chi+1-general} to obtain Theorem~\ref{thm:kempe}.
Similarly to before, we will use each vertex~$c$ of~$\KCk$ to construct a candidate graph~$G_c$ for~$G$ by looking at its neighbourhood at distance~$2$, as follows.
Fix $c\in \KCk$.
Create a vertex~$u$ in~$G_c$ for each maximal clique~$C_{c,u}$ of size at least~$2$, so that $V(G_c) \subseteq V(G)$.
(Observe that by Lemma~\ref{lm:independent} we can recognise such~$C_{c,u}$.)
Then put an edge between two vertices~$u$ and~$v$ in~$G_c$ if and only if there exists a pair $c_u\in C_{c,u}$ and $c_v\in C_{c,v}$ such that condition $(\star\star)$ is \textbf{not} satisfied.
(Observe that this is something we can easily check.)
By Lemmas~\ref{lm:are-trivial}, \ref{lm:chi+1-general-Kempe}, and~\ref{lm:kempecriterion}, if $uv\in E(G_c)$, then $uv\in E(G)$.
In other words, each candidate graph~$G_c$ is a subgraph of~$G$.

Unfortunately, ignoring cliques of size~$1$ may remove some~$C_{c,u}$ where~$u$ sees exactly $k-1$ colours in its closed neighbourhood.
However, we can still show the following.

\begin{lemma}
\label{lem:3.5}\mbox{}\\*
	If $k>\chi(G)+1$, then every colouring~$c$ in~$\KCk$ that uses at most $k-2$ colours satisfies $G_c\cong G$.
\end{lemma}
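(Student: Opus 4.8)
The plan is to prove the two equalities $V(G_c)=V(G)$ and $E(G_c)=E(G)$ separately. We already know that each candidate graph $G_c$ is a subgraph of $G$ under the identification $u\leftrightarrow C_{c,u}$, so that $V(G_c)\subseteq V(G)$ and $E(G_c)\subseteq E(G)$; it therefore suffices to establish the two reverse inclusions. The only reason $G_c$ could be a \emph{strict} subgraph of $G$ is that, in order not to confuse a clique $C_{c,u}$ of size $1$ with the isolated vertices of $D_c$ (cf.\ Lemma~\ref{lm:independent}), the construction discards every $C_{c,u}$ of size~$1$; so the heart of the matter is to rule out both lost vertices and lost edges.

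For the vertex inclusion $V(G)\subseteq V(G_c)$, I would first observe that the hypothesis forces $|C_{c,u}|\ge 2$ for every $u\in V(G)$. Since $c$ uses at most $k-2$ colours, at least two colours of $[k]$ are entirely unused by $c$; each of them is different from $c(u)$ and appears on no neighbour of $u$, and so each gives a distinct colouring reachable from $c$ by recolouring $u$. Hence $|C_{c,u}|\ge 2$, so $u$ is a vertex of $G_c$. This is precisely the step that exploits the ``$k-2$'' bound, and it is the only place where discarding the size-$1$ cliques could have cost us a vertex; I expect this to be the main (though not technically difficult) point of the argument.

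For the edge inclusion $E(G)\subseteq E(G_c)$, I would adapt the argument of Lemma~\ref{lem:exact-chi+1} and then invoke the criterion of Lemma~\ref{lm:kempecriterion}. Let $uv\in E(G)$; by the previous step $u,v\in V(G_c)$. Fix a colour $i$ unused by $c$, and let $c_u\in C_{c,u}$ and $c_v\in C_{c,v}$ be obtained from $c$ by recolouring $u$ and $v$ respectively to $i$ (both valid, as $i$ is globally unused). The key observation is that $C_{c_u,v}\cap C_{c_v,u}=\varnothing$: any colouring in this intersection would agree with $c_u$ off $v$ and with $c_v$ off $u$, hence would assign colour $i$ to both $u$ and $v$, contradicting $uv\in E(G)$. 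By Lemma~\ref{lm:kempecriterion}, an empty intersection means the pair $(c_u,c_v)$ fails condition $(\star\star\star)$, so by the definition of $G_c$ we obtain $uv\in E(G_c)$.

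Combining the two inclusions gives $V(G_c)=V(G)$ and $E(G_c)=E(G)$, so $G_c=G$ under the natural identification, and in particular $G_c\cong G$. Note that the hypothesis $k>\chi(G)+1$ is used only to guarantee that colourings $c$ using at most $k-2$ colours exist at all (any proper $\chi(G)$-colouring uses $\chi(G)\le k-2$ colours), which is exactly what lets this lemma supply a candidate graph isomorphic to $G$ in the proof of Theorem~\ref{thm:kempe}.
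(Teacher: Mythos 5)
Your proof is correct and follows essentially the same route as the paper's: two globally unused colours give $|C_{c,u}|\ge 2$ for every $u$ (so $V(G_c)=V(G)$), and for each edge $uv$ recolouring both endpoints to an unused colour $i$ makes $C_{c_u,v}\cap C_{c_v,u}$ empty, so that Lemma~\ref{lm:kempecriterion} shows $(\star\star\star)$ fails and $uv\in E(G_c)$. You even spell out two details the paper leaves implicit, namely why the intersection is empty and that $k>\chi(G)+1$ serves only to guarantee such colourings $c$ exist.
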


\begin{proof}
	Let $c\in V(\KCk)$ be a colouring that uses at most $k-2$ colours, and let $i,j\in[k]$ be colours not used by~$c$.
	Observe that every vertex~$u$ of~$G$ can be recoloured via a trivial Kempe swap to both colours~$i$ and~$j$.
	Hence $|C_{c,u}|\ge 2$ for every $u\in V(G)$, and so $V(G_c)=V(G)$.
	Since we know that~$G_c$ is a subgraph of~$G$, it remains to show that every edge of~$G$ lies in~$G_c$.
	Suppose $uv\in E(G)$.
	Let~$c_u$ (resp.~$c_v$) be the colouring where~$u$ (resp.~$v$) has been recoloured with colour~$i$.
	If there exists some $c'\in C_{c_u,v} \cap C_{c_v,u}$, then $c'(u)=i=c'(v)$, a contradiction.
	So $C_{c_u,v} \cap C_{c_v,u}$ is empty.
	Therefore, by Lemma~\ref{lm:kempecriterion}, $c_u$ and~$c_v$ do not satisfy $(\star\star)$.
	Hence~$uv\in E(G_c)$.
\end{proof}

\noindent\noindent
In particular, if $k>\chi(G)+1$, then applying Lemma~\ref{lem:3.5} to any $k$-colouring~$c$ which uses only $\chi(G)$ colours yields a candidate graph~$G_c$ isomorphic to~$G$.
As with the case of single-vertex recolouring, since all candidate graphs are subgraphs of~$G$, we can recognise candidate graphs isomorphic to~$G$ by selecting any candidate graph with a maximum number of vertices and edges.
This completes the proof of Theorem~\ref{thm:kempe}.

Proving Theorem~\ref{thm:algo-kempe} requires only a small amount of additional work.

\begin{proof}[Proof of Theorem \ref{thm:algo-kempe}]
	If $k>\min\{n,2\Delta\}+1$, then under any $k$-colouring of~$G$, every vertex of~$G$ can be recoloured with at least two colours.
	Arguing as in the proof of Lemma~\ref{lem:3.5} tells us that~$G_c$ is isomorphic to~$G$ for every $k$-colouring~$c$ of~$G$.
	Given such a $k$-colouring~$c$, every Kempe swap from~$c$ is determined by choosing a vertex~$v$ and a colour $j\ne c(v)$: swap the colours $c(v)$ and~$j$ on the maximal induced subgraph with colours~$c(v)$ and~$j$ containing~$v$.
	This means that there are at most $kn$ Kempe swaps possible from any $k$-colouring of~$G$.
	Hence any $k$-colouring of~$G$ has at most $(kn)^2$ neighbours at distance~$1$ or~$2$ in~$\KCk$, completing the proof of Theorem~\ref{thm:algo-kempe}.
\end{proof}

\noindent
The same examples from Subsection~\ref{sub:2.1} for single-vertex recolouring show that a graph~$G$ cannot in general be determined from its $\chi(G)$-Kempe-recolouring graph.
For example, the $k$-Kempe-recolouring graph of any $(k-1)$-tree is the graph on~$k!$ vertices, each corresponding to some permutation of $\{1,\ldots,k\}$, where two vertices share an edge if and only if their corresponding permutations differ by a transposition of two elements (colours).
Similarly, the construction from Theorem~\ref{thm:counterexample} applies to the case of Kempe-recolouring by observing that, as before, given a graph~$G$, adding an edge between two vertices which can never be assigned the same colour under any $\chi(G)$-colouring cannot change its $\chi(G)$-Kempe-recolouring graph, even now that we allow non-trivial Kempe swaps.

However, the question of whether any graph~$G$ can be determined from its $(\chi(G)+1)$-Kempe-recolouring graph remains open.

\section{Independent set reconfiguration --- proofs and examples}
\label{sec:IS}

In this section we investigate the possibility of determining a graph~$G$ from its independent set reconfiguration graph by proving Theorem~\ref{thm:IS}.
We begin by considering the three cases (TJ, TS, and TAR) for $k\in\{0,1\}$.

The TS and TJ cases for $k\in\{0,1\}$ follow from the following straightforward observations.

\begin{observation}\mbox{}\\*
	Let~$G$ be a graph.
	Then $\mathcal{I}^0_\mathrm{J}(G)$ and $\mathcal{I}^0_\mathrm{S}(G)$ are single vertices, $\mathcal{I}^1_\mathrm{J}(G)$ is a clique, and~$\mathcal{I}^1_\mathrm{S}(G)$ is isomorphic to~$G$.
\end{observation}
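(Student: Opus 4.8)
The plan is to verify each of the four claimed isomorphisms directly from the definitions of the reconfiguration graphs, treating them one at a time. Each claim is essentially a matter of identifying the vertex set (the relevant independent sets) and then checking which pairs are joined by an allowed move. Since the statement concerns $k\in\{0,1\}$, the independent sets involved are extremely small, so the combinatorics will be trivial; the work is purely in being careful about the definitions, especially the size-at-least-$k$ convention for \TAR.

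First I would handle $\mathcal{I}^0_{\rm J}(G)$ and $\mathcal{I}^0_{\rm S}(G)$. Under both \TJ and \TS, the number of tokens is preserved, so the only independent set of size $0$ is the empty set. Hence each of these graphs has exactly one vertex and no possible move, giving a single isolated vertex. Next, for $\mathcal{I}^1_{\rm J}(G)$, the vertices are the independent sets of size $1$, i.e.\ the singletons $\{w\}$ for $w\in V(G)$; under \TJ a token may jump to \emph{any} other vertex, so $\{w\}$ and $\{w'\}$ are always adjacent for $w\neq w'$, and the graph is the complete graph on $|V(G)|$ vertices, a clique. For $\mathcal{I}^1_{\rm S}(G)$, the vertices are again the singletons, but under \TS a token may only slide to an \emph{adjacent} vertex; thus $\{w\}$ and $\{w'\}$ are adjacent in $\mathcal{I}^1_{\rm S}(G)$ exactly when $ww'\in E(G)$. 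The natural bijection $w\mapsto\{w\}$ is therefore an isomorphism between $G$ and $\mathcal{I}^1_{\rm S}(G)$.

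I do not expect any genuine obstacle here: each verification is immediate once the move rule is applied to singletons or to the empty set. The only point requiring a little care is making sure the \TS and \TJ size-preservation is invoked to rule out any other vertices in the $k=0$ cases (so that the graphs really are single vertices rather than, say, also including larger sets), and to confirm that for $k=1$ every vertex is a singleton. Because the observation as stated covers only \TJ and \TS, no subtlety about the \TAR size-at-least-$k$ convention enters at this stage; that case is treated separately in the subsequent text. I would present the proof as four short sentences, one per claim, and mark it complete.
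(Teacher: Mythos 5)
Your proposal is correct and matches the paper's treatment: the paper states this as a ``straightforward observation'' without proof, and your direct verification from the move rules (size preservation forcing the empty set alone for $k=0$, singletons for $k=1$, with \textsc{TJ} giving all pairs adjacent and \textsc{TS} giving adjacency exactly along edges of $G$ via $w\mapsto\{w\}$) is precisely the intended argument. Nothing is missing.
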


\noindent
The TAR case requires a bit more work, hence we dedicate separate parts to the two cases $k=0$ and $k=1$.

\subsection{Token Addition and Removal with \texorpdfstring{$k=0$}{k=0}}

To determine~$G$ from its reconfiguration graph $\mathcal{I}^0_\mathrm{AR}(G)$, we will show that all the information needed is contained in the neighbourhood of any maximum-degree vertex.
First note that the degree of a vertex in $\mathcal{I}^0_\mathrm{AR}(G)$ is at most $n=|V(G)|$, since we can only add or remove one token at a time.
Since the empty set has exactly~$n$ neighbours, we find that the maximum degree of $\mathcal{I}^0_\mathrm{AR}(G)$ is~$n$.

We start by characterising the vertices that have maximum degree in $\mathcal{I}^0_\mathrm{AR}(G)$.
From the observation above, the neighbours of a maximum-degree independent set of~$G$ correspond exactly to the addition or removal of a token on a vertex of~$G$.
Therefore, we can use $V(G)$ to index the neighbourhood of any vertex~$I$ of $\mathcal{I}^0_\mathrm{AR}(G)$ of degree~$n$.
The neighbourhood of such an~$I$ is the family $(I_x)_{x\in V(G)}$, where~$I_x$ denotes the symmetric difference $I\triangle\{x\}$ (the independent set obtained from~$I$ by adding or removing a token on the vertex~$x$ of~$G$).

\begin{lemma}\mbox{}\\*
	Let~$I$ be a vertex of $\mathcal{I}^0_\mathrm{AR}(G)$ of maximum degree and let~$I_u$ and~$I_v$ be two neighbours of~$I$.
	Then $uv\notin E(G)$ if and only if~$I_u$ and~$I_v$ have a common neighbour $I'\ne I$ in $\mathcal{I}^0_\mathrm{AR}(G)$.
\end{lemma}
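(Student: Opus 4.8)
The plan is to exploit the strong structural constraint that being a maximum-degree vertex imposes on $I$, and then to pin down the \emph{unique} possible common neighbour of $I_u$ and $I_v$ explicitly, so that the equivalence reduces to a single independence check.

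First I would determine what $\deg(I)=n$ actually forces. Since $n$ is the maximum degree, every set $I\triangle\{x\}$ with $x\in V(G)$ must be an independent set. For $x\notin I$ this says $I\cup\{x\}$ is independent, i.e.\ $x$ has no neighbour in $I$; as $I$ is itself independent, we conclude $N(I)=\varnothing$, so every vertex of $I$ is isolated in $G$. In particular each $I_x=I\triangle\{x\}$ is a genuine vertex of $\mathcal{I}^0_{\rm AR}(G)$, and this isolation property is exactly what will trivialise the case analysis later.

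Next I would show that the only candidate for a common neighbour $I'\neq I$ of $I_u$ and $I_v$ is $I'=I\triangle\{u,v\}$. Any common neighbour satisfies $I'=I_u\triangle\{a\}=I_v\triangle\{b\}$ for some vertices $a,b$, since adjacency in $\mathcal{I}^0_{\rm AR}(G)$ is symmetric difference by a single vertex. Writing $I_u=I\triangle\{u\}$ and $I_v=I\triangle\{v\}$ and cancelling $I$ gives $\{u\}\triangle\{a\}=\{v\}\triangle\{b\}$. If $a=u$ the left side is empty, forcing $b=v$ and hence $I'=I$, which is excluded; so $a\neq u$ and symmetrically $b\neq v$, whence $\{u,a\}=\{v,b\}$. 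Since $u\neq v$, the only possibility is $a=v$ and $b=u$, giving $I'=I\triangle\{u,v\}$. It then remains to decide when this candidate is independent, and here the first step does all the work: using $N(I)=\varnothing$, the only possible edge inside $I\triangle\{u,v\}$ is between $u$ and $v$, and such an edge is present precisely when both $u,v\notin I$ and $uv\in E(G)$. But $uv\in E(G)$ already forces $u,v\notin I$, since $I$ consists of isolated vertices. Hence $I\triangle\{u,v\}$ is independent if and only if $uv\notin E(G)$. Combining the three steps, a common neighbour $I'\neq I$ of $I_u$ and $I_v$ exists if and only if $I\triangle\{u,v\}$ is a valid vertex, which holds if and only if $uv\notin E(G)$.

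I expect the main obstacle to be the second step, ruling out all other common neighbours: here one cannot simply exhibit a witness but must argue via symmetric-difference bookkeeping that no alternative $I'$ can be adjacent to both $I_u$ and $I_v$. The potentially awkward distinctions according to whether $u$ and $v$ lie in $I$ are then completely flattened by the observation $N(I)=\varnothing$, so once that is in hand the remaining verifications are routine.
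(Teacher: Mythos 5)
Your proof is correct, and it shares the paper's pivotal object: the set $I\triangle\{u,v\}$. The packaging, however, is genuinely different in two respects. First, you squeeze more out of the maximum-degree hypothesis than the paper does: from $\deg(I)=n$ you deduce $N_G(I)=\varnothing$, i.e.\ every vertex of $I$ is isolated in $G$ --- a structural fact the paper never states (it only uses, locally, that $I$, $I_u$, $I_v$ and any putative common neighbour are independent sets). Second, you prove \emph{up front}, by symmetric-difference cancellation, that $I\triangle\{u,v\}$ is the unique candidate for a common neighbour distinct from $I$, which collapses the biconditional into a single independence test for that one set. The paper instead argues the two directions separately: for ($\Rightarrow$) it exhibits $I'=I\triangle\{u,v\}$ directly, and for ($\Leftarrow$) it first deduces via a short case analysis that $u\in I_u$ and $v\in I_v$, then observes that any common neighbour other than $I$ would have to contain both endpoints of the edge $uv$. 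What each approach buys: yours is case-free and makes the distance-$2$ structure around $I$ completely transparent, which is pleasant; the paper's is leaner on hypotheses --- its argument never really uses maximum degree beyond the existence of the neighbours $I_u,I_v$, so it proves the equivalence for an \emph{arbitrary} vertex $I$ of $\mathcal{I}^0_{\rm AR}(G)$, a generality that your shortcut through $N_G(I)=\varnothing$ gives up. One small point to make explicit in your uniqueness step: you need $u\neq v$ (equivalently $I_u\neq I_v$) to conclude $a=v$, $b=u$ from $\{u,a\}=\{v,b\}$; this is implicit in the phrase ``two neighbours'' and is likewise tacit in the paper, but it is worth a word.
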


\begin{proof}
	($\Rightarrow$) Suppose $uv\notin E(G)$ and let $I'=I\triangle\{u,v\}=I_u\triangle\{v\}=I_v\triangle\{u\}$.
	The set~$I'$ is an independent set since $uv\notin E(G)$.
	Moreover, $I'\ne I$, and~$I_u$ and~$I'$ differ only in the vertex~$v$, and so $I_uI'\in E(\mathcal{I}^0_\mathrm{AR}(G))$.
	By symmetry of~$u$ and~$v$, we also have $I_vI'\in E(\mathcal{I}^0_\mathrm{AR}(G))$.

	($\Leftarrow$) Now suppose $uv\in E(G)$.
	Then~$u$ and~$v$ cannot belong simultaneously to an independent set of~$G$, so $I,I_u,I_v$ cannot contain both~$u$ and~$v$.
	Since either~$I$ or~$I_u$ contains~$u$, none of them may contain~$v$, and similarly~$I$ and~$I_v$ do not contain~$u$.
	Therefore $u\in I_u$ and $v\in I_v$.
	But then any neighbour $I'\ne I$ of~$I_u$ and~$I_v$ in $\mathcal{I}^0_\mathrm{AR}(G)$ must contain both~$u$ and~$v$, a contradiction.
\end{proof}

\noindent
The above lemma yields an algorithm to determine~$G$ from its reconfiguration graph $\mathcal{I}^0_\mathrm{AR}(G)$.
This algorithm runs in time polynomial in $|\mathcal{I}^0_\mathrm{AR}(G)|$.
However, the problem becomes more complicated when the empty independent set is not allowed, as we will see in the next part.

\subsection{Token Addition and Removal with \texorpdfstring{$k=1$}{k=1}}

Determining~$G$ from $\mathcal{I}^1_\mathrm{AR}(G)$ relies on determining, for each vertex of $\mathcal{I}^1_\mathrm{AR}(G)$, the size of the corresponding independent set of~$G$.
Indeed, once this is known, the graph~$G$ can be determined by associating a vertex with each independent set of size~$1$, then adding edges when these independent sets do not share a neighbour of size~$2$.

To this end, we introduce \emph{layerings} of graphs.
Given a graph~$G$, a layering of~$G$ is a partition of $V(G)$ into parts $V_1,\ldots,V_\ell$ (each part~$V_i$ being a \emph{layer}) such that:
\begin{enumerate}
	\setlength\itemsep{0mm}
	\item Each layer is an independent set of~$G$.\label{cond:indep}
	\item Each layer only has neighbours in the layers directly above and below it.\label{cond:neighbours}
	\item For $2\le p\le \ell$, each vertex in layer~$p$ has~$p$ neighbours in layer $p-1$.\label{cond:descendants}
	\item For $1\le p\le \ell$, every two vertices in layer~$p$ have at most one common neighbour in layer $p-1$, and at most one common neighbour in layer $p+1$.\label{cond:oneneigh}
\end{enumerate}

\noindent
Note that such a layering does not exist for all graphs.
In particular, if a graph is not bipartite, then it is impossible to partition its vertices according to the four conditions above.
However, we will show that partitioning the independent sets of any graph~$G$ according to their size yields a ``natural'' layering of $\mathcal{I}^1_\mathrm{AR}(G)$.

For convenience, in the remainder of this section we use the same letter to denote an independent set of~$G$ and its associated vertex in $\mathcal{I}^1_\mathrm{AR}(G)$.

\begin{lemma}
\label{lem:tar1}\mbox{}\\*
	For a graph~$G$, for $p=1,\ldots,\alpha(G)$ let~$V_p$ be the collection of independent sets of~$G$ of size~$p$.
	Then $V_1, \ldots, V_{\alpha(G)}$ is a layering of $\mathcal{I}^1_\mathrm{AR}(G)$.
\end{lemma}

\begin{proof}
	Let~$I$ be an independent set of~$G$.
	Since the TAR rule consists of adding or removing a token, the neighbours of~$I$ in $\mathcal{I}^1_\mathrm{AR}(G)$ are independent sets of size $|I|+1$ or $|I|-1$, which implies that both conditions~\ref{cond:indep} and~\ref{cond:neighbours} hold.
	We have that $\{I-v \mid v\in I\}\subseteq V_{|I|-1}$ is the set of all neighbours of~$I$ of size $|I|-1$, thus condition~\ref{cond:descendants} holds.
	Finally, let~$I$ and~$J$ be two independent sets of the same size.
	If they differ by exchanging at least two vertices, then they do not have any common neighbours.
	Suppose they differ by exchanging exactly one vertex.
	Then they have exactly one common neighbour ($I \cap J$) in $V_{|I|-1}$, and at most one neighbour ($I \cup J$) in $V_{|I|+1}$, thus condition~\ref{cond:oneneigh} holds.
\end{proof}

\noindent\noindent
To conclude the proof that a graph~$G$ can be determined from $\mathcal{I}^1_\mathrm{AR}(G)$, we first prove Lemma~\ref{lem:tar2}, showing that $\mathcal{I}^1_\mathrm{AR}(G)$ admits a unique layering, except for one very specific case.
We then deal with this exceptional case in Lemmas~\ref{lem:cc-decomposition} and~\ref{lem:cycle-reconf}.

\begin{lemma}
\label{lem:tar2}\mbox{}\\*
	Let~$G$ be a graph such that $\mathcal{I}^1_\mathrm{AR}(G)$ has no connected component that is an even cycle.
	Then the layering of $\mathcal{I}^1_\mathrm{AR}(G)$ defined in Lemma~\ref{lem:tar1} is the unique layering of $\mathcal{I}^1_\mathrm{AR}(G)$.
\end{lemma}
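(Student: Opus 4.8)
The plan is to compare an arbitrary layering with the ``natural'' one from the proof of Lemma~\ref{lem:tar1}, in which a vertex of $\mathcal{I}^1_{\rm AR}(G)$ (an independent set $I$ of $G$) lies in layer $|I|$. Write $\Gamma=\mathcal{I}^1_{\rm AR}(G)$, let $\{W_1,\ldots,W_q\}$ be any layering of $\Gamma$, and write $\ell(x)$ for the index of the layer containing $x$; I want to show $W_i=\{I:|I|=i\}$ for every $i$. A first, easy observation is that every connected component of $\Gamma$ meets $W_1$: if some component had smallest layer $m\ge 2$, then a vertex $x$ of that component with $\ell(x)=m$ would have all its (necessarily same-component) neighbours in layers $\ge m$, hence none in layer $m-1$, contradicting condition~\ref{cond:descendants}. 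Since the natural layering also has smallest layer $1$ on each component, both layerings start from the same bottom level.

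The core of the argument is an induction showing that, \emph{once the bottom layer is correct}, the whole layering is forced. Assume $W_j=\{I:|I|=j\}$ for all $j<i$ (with $i\ge 2$); I claim $W_i=\{I:|I|=i\}$. For the inclusion $\supseteq$, let $|I|=i$: its $i$ single-element deletions are independent sets of size $i-1$, so by the inductive hypothesis they lie in $W_{i-1}$; thus $I$ has a neighbour in layer $i-1$, and by condition~\ref{cond:neighbours} either $\ell(I)=i$ or $\ell(I)=i-2$. The latter is impossible, since by hypothesis $W_{i-2}$ consists of sets of size $i-2\ne i$, so $\ell(I)=i$. For the reverse inclusion, let $x\in W_i$ and put $s=|x|$. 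Every neighbour of $x$ has size $s\pm 1$, and by condition~\ref{cond:descendants} exactly $i$ of them lie in $W_{i-1}=\{I:|I|=i-1\}$; so $x$ has a neighbour of size $i-1$, forcing $s-1=i-1$ or $s+1=i-1$, i.e.\ $s=i$ or $s=i-2$. Again $s=i-2$ would place $x$ in layer $i-2$ by hypothesis, a contradiction, so $s=i$. This proves $W_i=\{I:|I|=i\}$; and since the size function and the layering are both partitions of $V(\Gamma)$, once all the inclusions $\{I:|I|=j\}\subseteq W_j$ hold we obtain $\ell(I)=|I|$ for every vertex, i.e.\ the layering is the natural one.

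It therefore remains only to pin down the bottom layer: to show that $W_1$ is exactly the set of singletons on every component that is not an even cycle. This is the main obstacle, and it is precisely here that the even-cycle hypothesis enters. The plan is to analyse the two lowest layers through conditions~\ref{cond:descendants} and~\ref{cond:oneneigh}: every layer-$2$ vertex has exactly two neighbours in $W_1$, and by condition~\ref{cond:oneneigh} two layer-$2$ vertices share at most one neighbour in $W_1$, so $W_2$ behaves like the edge set of a simple graph on the ``vertex set'' $W_1$. I then intend to show that placing any set of size $\ge 2$ in $W_1$ (equivalently, leaving some singleton out of $W_1$) propagates: a set $\{a,b\}\in W_1$ forces the singletons $\{a\},\{b\}$ into $W_2$, whose own two-neighbours-below requirement forces further size-$2$ sets into $W_1$, and so on. Tracking these forced placements, while using condition~\ref{cond:oneneigh} to forbid repeated common neighbours and condition~\ref{cond:descendants} to forbid any vertex of higher degree from interrupting the alternation, one finds that every vertex of the component has degree exactly $2$, so the component is a cycle; as $\Gamma$ is bipartite this cycle is even. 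Conversely, an even cycle genuinely admits the ``phase-shifted'' alternating layering $1,2,1,2,\dots$ in two ways, which is exactly the non-uniqueness the hypothesis rules out. Making this dichotomy rigorous is the crux; the remaining exceptional even-cycle case is then treated separately in Lemmas~\ref{lem:cc-decomposition} and~\ref{lem:cycle-reconf}.
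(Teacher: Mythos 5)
Your reduction and your induction step are correct, and they coincide with the paper's: the paper likewise proves by induction on $p$ that each layer $V_p$ contains only independent sets of size $p$, using condition~\ref{cond:descendants} to find neighbours in the layer below and the inductive hypothesis to rule out the alternative index $p-2$ (your preliminary observation that every component meets $W_1$ is fine but not actually needed, since equality of the layers follows automatically once each $W_p$ is shown to contain only size-$p$ sets). However, the proposal has a genuine gap, and you flag it yourself: the base case $W_1=\{\text{singletons}\}$ is only a plan, and it is exactly where the paper's proof does its real work. Worse, the mechanism you envision --- every violation propagates into a $2$-regular component, hence an even cycle --- is not accurate for the case of a set of size at least $3$ in $W_1$. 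Such a set never generates an alternating cycle structure; instead the paper rules it out \emph{unconditionally} (no even-cycle hypothesis involved) by a local argument applying condition~\ref{cond:oneneigh} twice: if $I\in V_1$ with $u,v\in I$ distinct, then $I\setminus\{u\}\in V_2$ needs a second neighbour in $V_1$ by condition~\ref{cond:descendants}; a deletion $I\setminus\{u,x\}$ is impossible because $I$ and $I\setminus\{u,x\}$ would have the two common neighbours $I\setminus\{u\}$ and $I\setminus\{x\}$ in $V_2$, so the second neighbour is $\hat I=(I\cup\{x\})\setminus\{u\}$, and then $I$ and $\hat I$ have the two common neighbours $I\setminus\{u\}$ and $I\cup\{x\}$ in $V_2$, a contradiction. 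Your propagation sketch, which starts from a pair $\{a,b\}\in W_1$, simply does not see this case.

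Even in the size-$2$ case, the two concrete steps that make the forcing work are missing from your sketch: one must show that no size-$3$ set can lie in $W_2$ (its two $W_1$-neighbours would be, say, $\{u,v\}$ and $\{u,w\}$, which then share the common neighbours $\{u\}$ and $\{u,v,w\}$ in $W_2$, violating condition~\ref{cond:oneneigh}), and that a singleton in $W_2$ has no neighbour in $W_3$ (such a neighbour would be a size-$2$ set requiring three neighbours in $W_2$ by condition~\ref{cond:descendants}, one of which would have size $3$). Only with both facts does one get that size-$2$ sets in $W_1$ and singletons in $W_2$ all have degree exactly $2$, so that the component is a cycle, even by parity of sizes, contradicting the hypothesis. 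A final minor point: your closing remark that the exceptional even-cycle case ``is then treated in Lemmas~\ref{lem:cc-decomposition} and~\ref{lem:cycle-reconf}'' conflates this lemma with the ambient theorem --- those lemmas are needed to finish the TAR case of Theorem~\ref{thm:IS}, but Lemma~\ref{lem:tar2} itself simply assumes the even-cycle components away, and its proof must end in a contradiction, not a deferral.
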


\begin{proof}
	Let $V_1, \ldots, V_\ell$ be a layering of $\mathcal{I}^1_\mathrm{AR}(G)$.
	We show by induction on~$p$ that each~$V_p$ contains only independent sets of size~$p$, except for connected components of $\mathcal{I}^1_\mathrm{AR}(G)$ that form an even cycle.

	\textbf{Base case $p=1$.}
	We start by showing that~$V_1$ contains no independent set of size at least~$3$.
	Assuming otherwise, let $I=J\cup\{u,v\}$, $u\ne v$ and $J\ne\varnothing$, be such an independent set.
	By condition~\ref{cond:neighbours}, all neighbours of~$I$ lie in~$V_2$, so in particular $I_1=J\cup\{u\}$ and $I_2=J\cup\{v\}$ lie in~$V_2$.
	By condition~\ref{cond:descendants}, $I_1$ must have an other neighbour than~$I$ in~$V_1$, which is either $I_1\setminus\{x\}$ for some $x\in I_1$, or $I_1^+=I_1\cup\{y_1\}$ for some $y_1\notin I$.
	The first case is impossible, since it implies that $I_1$ and $I\setminus\{x\}$ are common neighbours of~$I$ and $I_1\setminus\{x\}$ in~$V_2$, contradicting condition~\ref{cond:oneneigh}.
	Therefore, the neighbours of~$I_1$ in~$V_1$ are~$I$ and~$I_1^+$.
	By the same argument we get that~$I_2$ has neighbours~$I$ and $I_2^+=I_2\cup\{y_2\}$ in~$V_1$ for some $y_2\notin I$.
	Since~$I_1$ and~$I_2$ already have a common neighbour~$I$ in~$V_1$, by conditions~\ref{cond:neighbours} and~\ref{cond:oneneigh} the other common neighbour~$J$ must lie in~$V_3$.
	Next observe that $I_1^+\setminus\{u\}=J\cup\{y_1\}$ and $I_2^+\setminus\{v\}=J\cup\{y_2\}$ both lie in~$V_2$ by condition~\ref{cond:neighbours}.
	So~$J$ has neighbours $I_1,I_2,J\cup\{y_1\},J\cup\{y_2\}$ in~$V_2$.
	By condition~\ref{cond:descendants}, $J$ can only have~$3$ neighbours in~$V_2$, so we must have $y_1=y_2$.
	Now we have that all of $I=J\cup\{u,v\}$, $I_1^+=J\cup\{u,y_1\}$, and $I_2^+=J\cup\{v,y_1\}$ are independent sets in~$V_1$.
	This means that also $J\cup\{u,v,y_1\}$ is an independent set, which is a neighbour of all of $I,I_1^+,I_2^+$.
	By condition~\ref{cond:neighbours}, $J\cup\{u,v,y_1\}$ lies in~$V_2$, but then the fact that is has~$3$ neighbours in~$V_1$ contradicts condition~\ref{cond:descendants}.

	Next we prove that~$V_2$ does not contain independent sets of size~$3$.
	If~$V_2$ contains an independent set $\{u,v,w\}$, then by the previous paragraph any neighbour of size~$4$ cannot lie in~$V_1$.
	Since it must have its two neighbours in~$V_1$ by condition~\ref{cond:neighbours}, these must be (up to symmetry) $\{u,v\}$ and $\{u,w\}$.
	But these two sets have $\{u\}$ and $\{u,v,w\}$ as common neighbours in~$V_2$, contradicting condition~\ref{cond:oneneigh}.

	By the previous paragraph, every independent set of size~$2$ in~$V_1$ can only have neighbours of size~$1$ in~$V_2$, hence has degree~$2$.
	Let~$I$ be an independent set of size~$1$ in~$V_2$.
	Then~$I$ has two neighbours in~$V_1$ by condition~\ref{cond:descendants}, which must have size~$2$.
	If~$I$ had a neighbour~$I'$ in~$V_3$, then~$I'$ would have size~$2$ and three neighbours in~$V_2$ by condition~\ref{cond:neighbours}.
	One of those three neighbours would therefore have size~$3$ and lie in~$V_2$, which contradicts the previous paragraph.
	We can conclude that if there is an independent set of size~$2$ in~$V_1$, then it must be adjacent to exactly two sets in~$V_2$, both of whom have size~$1$; and every set of size~$1$ in~$V_2$ also has exactly two neighbours, both of whom have size~$2$ and lie in~$V_1$.
	In other words, if there is an independent set of size~$2$ in~$V_1$, then it must lie in a component of $\mathcal{I}^1_\mathrm{AR}(G)$ that is an even cycle.

	This proves that for components of $\mathcal{I}^1_\mathrm{AR}(G)$ that do not form an even cycle, $V_1$ contains only independent sets of size~$1$, completing the proof of the base case.

	\textbf{Induction step.} Let $p>1$, and suppose that for all $i<p$ we have that~$V_i$ contains only independent sets of size~$i$.
	Let $I\in V_p$.
	By condition~\ref{cond:descendants}, $I$ has~$p$ neighbours in $V_{p-1}$, each of size $p-1$ by the induction hypothesis.
	Let~$I'$ be one of these neighbours.
	If $p=2$, then~$I'$ has size~$1$, and then~$I$ must have size~$2$, which concludes the induction step.
	Otherwise, $p>2$ and~$I'$ has $p-1$ neighbours in $V_{p-2}$, each of size $p-2$ by the induction hypothesis.
	Since~$I'$ is adjacent only to $|I'|=p-1$ independent sets of size $|I'|-1=p-2$, any other neighbour of~$I'$ must have size $|I'|+1=p$.
	In particular, $I$ must have size~$p$, which concludes the induction step.
\end{proof}

\noindent
It remains to investigate what happens when the hypothesis of Lemma~\ref{lem:tar2} is not satisfied, \textit{i.e.}\ when $\mathcal{I}^1_\mathrm{AR}(G)$ contains an even cycle as a connected component.
We first show that $\mathcal{I}^1_\mathrm{AR}(G)$ is not connected only when~$G$ can be decomposed as the complete join of two smaller graphs whose reconfiguration graphs are precisely the connected components of $\mathcal{I}^1_\mathrm{AR}(G)$ (Lemma~\ref{lem:cc-decomposition}).
Then we show that the only graph whose reconfiguration graph~$\mathcal{I}^1_\mathrm{AR}(G)$ is the even cycle~$C_{2i}$ for some $i\ge2$ is the complement of~$C_i$ (Lemma~\ref{lem:cycle-reconf}).
Combining these results with the observations at the start of this section and Lemma~\ref{lem:tar2} concludes the case $k=1$ for TAR.

\begin{lemma}
\label{lem:cc-decomposition}\mbox{}\\*
	Let~$G$ be a graph and let~$C$ be a connected component of $\mathcal{I}^1_\mathrm{AR}(G)$.
	Then~$G$ is the complete join $G_1\bowtie G_2$ with $C\cong\mathcal{I}^1_\mathrm{AR}(G_1)$ and $\mathcal{I}^1_\mathrm{AR}(G)-C\cong\mathcal{I}^1_\mathrm{AR}(G_2)$.
\end{lemma}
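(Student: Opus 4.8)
The plan is to identify the component $C$ with the set of vertices of $G$ that its independent sets touch, and then show that this vertex subset together with its complement realises the desired join. Concretely, set $W=\bigcup_{I\in V(C)}I\subseteq V(G)$ and $W'=V(G)\setminus W$, and take $G_1=G[W]$ and $G_2=G[W']$. The whole argument rests on two elementary reachability moves in $\mathcal{I}^1_{\rm AR}(G)$: from any nonempty independent set $I$ one reaches the singleton $\{v\}$ for each $v\in I$ by removing tokens one at a time (never hitting the empty set, since we stop at size $1$), and conversely from a singleton $\{v\}$ one reaches any nonempty independent set $I\ni v$ by adding the vertices of $I$ one at a time (each intermediate set is a subset of $I$, hence independent and nonempty).

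First I would prove that $V(C)$ is exactly the collection of nonempty independent sets contained in $W$. The inclusion $V(C)\subseteq\{I:\varnothing\ne I\subseteq W\}$ is immediate from the definition of $W$. For the converse, let $I$ be a nonempty independent set with $I\subseteq W$ and pick any $v\in I$; since $v\in W$, some $J\in V(C)$ contains $v$, so by the first move $\{v\}\in V(C)$, and by the second move $I$ is reachable from $\{v\}$, giving $I\in V(C)$. Because adjacency in $\mathcal{I}^1_{\rm AR}(G)$ restricted to subsets of $W$ coincides with adjacency in $\mathcal{I}^1_{\rm AR}(G[W])$, this already yields $C\cong\mathcal{I}^1_{\rm AR}(G_1)$.

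Next I would establish the complete join $G=G_1\bowtie G_2$, which is the crux. Suppose some $w\in W$ and $w'\in W'$ were non-adjacent in $G$. Then $\{w,w'\}$ is a nonempty independent set; since $\{w\}\subseteq W$ is, by the previous step, a vertex of $C$, adding the token $w'$ shows $\{w,w'\}$ is adjacent to $\{w\}$ and hence also lies in $C$. But then $w'\in\bigcup_{I\in V(C)}I=W$, contradicting $w'\in W'$. So every vertex of $W$ is adjacent in $G$ to every vertex of $W'$, i.e.\ $G=G_1\bowtie G_2$. A consequence I would record here is that no independent set of $G$ can meet both $W$ and $W'$, so every independent set lies entirely in $W$ or entirely in $W'$.

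Finally I would identify the remaining part. A nonempty independent set $I\notin V(C)$ is, by the first step, not contained in $W$, so by the join property it is contained in $W'$; conversely every nonempty independent set contained in $W'$ avoids $W$ and hence, again by the first step, is not in $C$. Thus the vertices of $\mathcal{I}^1_{\rm AR}(G)-C$ are precisely the nonempty independent sets of $G_2$, and since $C$ is a full connected component, deleting it just takes the induced subgraph on these vertices, whose edges are exactly the TAR-moves within $G_2$; hence $\mathcal{I}^1_{\rm AR}(G)-C\cong\mathcal{I}^1_{\rm AR}(G_2)$. I expect the join step to be the main obstacle: the real content of the lemma is that the ``reach'' of a component, measured by the vertices of $G$ its independent sets occupy, cannot be enlarged by a single non-edge leaving $W$, and everything else is bookkeeping built on the two reachability moves.
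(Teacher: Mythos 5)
Your proof is correct and takes essentially the same route as the paper: your $W$ coincides with the paper's $X_1=\{v\in V(G)\mid\{v\}\in V(C)\}$ (as your token-removal argument shows), and your join step is the paper's exact contradiction, namely that a non-edge between the two sides would produce a size-two independent set adjacent to vertices in distinct components. The only difference is presentational: you make explicit the reachability bookkeeping (removing tokens down to a singleton, adding them back up) that the paper's terse final sentence leaves implicit when concluding $C\cong\mathcal{I}^1_{\rm AR}(G_1)$ and $\mathcal{I}^1_{\rm AR}(G)-C\cong\mathcal{I}^1_{\rm AR}(G_2)$.
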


\begin{proof}
	Let $X_1=\{v\in V(G)\mid \{v\}\in V(C)\}$ and $X_2=V(G)\setminus X_1$.
	In~$G$, the graphs~$G_1$ and~$G_2$ induced by~$X_1$ and~$X_2$ must be completely joined, since otherwise there would be an independent set $I=\{v_1,v_2\}$ with $v_1\in X_1,v_2\in X_2$.
	But~$I$ is adjacent to both $\{v_1\}\in C$ and $\{v_2\}\notin C$, contradicting that~$C$ is a component.

	Therefore $G\cong G_1\bowtie G_2$, hence every independent set of~$G$ is an independent set of either~$G_1$ or~$G_2$, which shows that $\mathcal{I}^1_\mathrm{AR}(G_1)\cong C$ and $\mathcal{I}^1_\mathrm{AR}(G_2)\cong \mathcal{I}^1_\mathrm{AR}(G)-C$.
\end{proof}

\begin{lemma}
\label{lem:cycle-reconf}\mbox{}\\*
	Let~$G$ be a graph and $i\ge2$ an integer.
	We have $\mathcal{I}^1_\mathrm{AR}(G)\cong C_{2i}$ if and only if $i\ge4$ and $G\cong \bar{C_i}$.
\end{lemma}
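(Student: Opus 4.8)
The plan is to prove the two directions separately, with the forward implication ($\mathcal{I}^1_{\rm AR}(G) \cong C_{2i} \Rightarrow G \cong \overline{C_i}$) carrying all the substance through a degree-counting argument, and the reverse implication handled by a direct identification of the reconfiguration graph.

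For the forward direction, I would assume $\mathcal{I}^1_{\rm AR}(G) \cong C_{2i}$, so that this graph is connected and $2$-regular, and extract structural information on $G$ from the two extreme types of vertices. First consider a \emph{maximum} independent set $I$, of size $\alpha := \alpha(G)$: its only neighbours in $\mathcal{I}^1_{\rm AR}(G)$ are the $\alpha$ sets $I \setminus \{v\}$ obtained by removing a single token (no token can be added since $I$ is maximum), so its degree is exactly $\alpha$, provided $\alpha \ge 2$. Since an edgeless graph is not a cycle we must have $\alpha \ge 2$, and then $2$-regularity forces $\alpha(G) = 2$; equivalently $\overline{G}$ is triangle-free. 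Next consider a \emph{singleton} $\{v\}$: because the empty set is excluded when $k=1$, its neighbours are precisely the sets $\{v,w\}$ with $vw \notin E(G)$, so its degree equals $\deg_{\overline{G}}(v)$, and $2$-regularity forces $\overline{G}$ to be $2$-regular. A triangle-free $2$-regular graph is a disjoint union of cycles each of length at least $4$, so $\overline{G} = C_{i_1} \sqcup \cdots \sqcup C_{i_r}$ with every $i_j \ge 4$, which means $G = \overline{C_{i_1}} \bowtie \cdots \bowtie \overline{C_{i_r}}$.

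It then remains to rule out $r \ge 2$. Here I would invoke Lemma~\ref{lem:cc-decomposition} (or argue directly that in a complete join no independent set can meet two parts): the join decomposition of $G$ makes $\mathcal{I}^1_{\rm AR}(G)$ the disjoint union of the graphs $\mathcal{I}^1_{\rm AR}(\overline{C_{i_j}})$. Since $C_{2i}$ is connected, this forces $r = 1$, so $\overline{G} = C_i$ with $i \ge 4$, i.e.\ $G \cong \overline{C_i}$; a count of vertices ($i$ singletons and $i$ independent sets of size $2$) confirms the total $2i$. For the reverse direction, take $i \ge 4$ and compute $\mathcal{I}^1_{\rm AR}(\overline{C_i})$ directly. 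As $C_i$ is triangle-free, the independent sets of $\overline{C_i}$ (the cliques of $C_i$) are exactly the $i$ singletons and the $i$ edges of $C_i$; a singleton $\{v\}$ is adjacent precisely to the two edges of $C_i$ incident with $v$, each edge $\{u,w\}$ is adjacent precisely to $\{u\}$ and $\{w\}$, and there are no further adjacencies (no set of size $3$ exists since $\alpha(\overline{C_i}) = 2$). Thus $\mathcal{I}^1_{\rm AR}(\overline{C_i})$ is the graph obtained from $C_i$ by subdividing every edge once, namely $C_{2i}$.

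The main obstacle is the degree bookkeeping in the forward direction: one must account for \emph{all} neighbours of a maximum independent set and of a singleton, being careful that the empty set is forbidden for $k=1$, and then combine triangle-freeness, $2$-regularity, and connectivity to collapse $\overline{G}$ to a single cycle. The small parameters $i \in \{2,3\}$ require no separate treatment, since the forward argument automatically forces each cycle component to have length at least $4$; in particular $2i \ge 8$, so no graph realises $C_4$ or $C_6$ as its reconfiguration graph, and the stated equivalence is meaningful exactly in the range $i \ge 4$.
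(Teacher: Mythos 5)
Your proof is correct and follows essentially the same route as the paper: degree counting in $\mathcal{I}^1_{\rm AR}(G)$ (via a maximum independent set and the singletons) forces $\alpha(G)=2$ and $\overline{G}$ to be $2$-regular, hence a disjoint union of cycles of length at least $4$, after which the complete-join decomposition and the connectedness of $C_{2i}$ collapse $\overline{G}$ to a single cycle, and the reverse direction is the same direct computation of $\mathcal{I}^1_{\rm AR}(\overline{C_i})$ as the subdivision of $C_i$. Your explicit observation that the equivalence is only meaningful for $i\ge 4$ is a small but genuine clarification the paper leaves implicit (for instance $\mathcal{I}^1_{\rm AR}(\overline{C_3})$ is the $3$-cube minus a vertex, not $C_6$, so the reverse implication would fail at $i=3$ without your restriction).
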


\begin{proof}
	($\Leftarrow$)
	The graph $\mathcal{I}^1_\mathrm{AR}(\bar{C_i})$ has~$2i$ vertices, which correspond to the vertices of~$C_i$ and to the pairs of vertices that are adjacent in~$C_i$.
	(Since~$C_i$, $i\ge4$, contains no triangles, $\bar{C_i}$ has no independent sets of size greater than~$2$.)
	Each independent set of size~$1$ in~$\bar{C_i}$ has exactly two neighbours of size~$2$ (the edges of~$C_i$), so $\mathcal{I}^1_\mathrm{AR}(\bar{C_i})\cong C_{2i}$.

	($\Rightarrow$)
	Let~$G$ be a graph such that $\mathcal{I}^1_\mathrm{AR}(G)\cong C_{2i}$; call the vertices $\{v_1, \ldots, v_{2i}\}$.
	Note that since the degree of every vertex in $\mathcal{I}^1_\mathrm{AR}(G)$ is~$2$, $G$ has no independent set of size~$3$.
	Moreover, the independent sets of size~$1$ also have degree~$2$, meaning that every vertex is adjacent to all but two other vertices.
	It follows that every vertex has degree~$2$ in~$\bar{G}$, and so~$\bar{G}$ is the disjoint union of some cycles, say $C_{i_1}\cup\cdots\cup C_{i_p}$.
	Then~$G$ is the complete join of the complements of these cycles, which makes $\mathcal{I}^1_\mathrm{AR}(G)$ the disjoint union of the same cycles again.
	Since we assumed $\mathcal{I}^1_\mathrm{AR}(G)\cong C_{2i}$, we must have $p=1$, $i_p=i$ and $G\cong\bar{C_i}$.
	Finally, the fact that~$G$ has no independent set of size~$3$ forces $i\ge4$.
\end{proof}

\subsection{The case \texorpdfstring{$k\ge2$}{k>=2}}

We end this section with the negative results for independent set reconfiguration when $k\ge2$.
We start with an easy observation.
Choose a graph~$H$ whose maximal independent sets have size at most $k-1$.
For any graph~$G$, let $G'=G\bowtie H$ be the complete join of~$G$ and~$H$.
By construction, every independent set of size at least~$k$ in~$G'$ is completely contained in the copy of~$G$ in~$G'$, hence~$G$ and~$G'$ share the same reconfiguration graphs when at least~$k$ tokens are used, regardless of the paradigm.
In particular, we have the following.

\begin{proposition}\mbox{}\\*
	For any number of tokens $k\ge 2$, there exists an infinite family of graphs that have the same reconfiguration graphs $\IkJ$, $\IkS$ and $\IkAR$.
\end{proposition}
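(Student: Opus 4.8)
The plan is to apply the complete-join observation stated immediately above the proposition directly, instantiating the auxiliary graph $H$ by an infinite sequence of graphs of increasing size, all sharing the property that their maximal independent sets have size at most $k-1$. Concretely, I would fix any single graph $G$ (for definiteness one admitting an independent set of size $k$, so that the common reconfiguration graph is non-trivial) and, for each $m\ge1$, set $H_m=K_m$, the complete graph on $m$ vertices. Since $k\ge2$, every maximal independent set of $K_m$ has size $1\le k-1$, so the hypothesis of the observation is met for every $m$.

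Next I would invoke the observation to conclude that $\mathcal{I}^k_{\rm R}(G\bowtie K_m)\cong\mathcal{I}^k_{\rm R}(G)$ for every $m\ge1$ and every rule ${\rm R}\in\{{\rm J},{\rm S},{\rm AR}\}$ simultaneously. The key point, already recorded in the observation, is that any independent set of size at least $k$ in $G\bowtie K_m$ is forced to lie entirely inside the copy of $G$: such a set cannot meet both parts of the join, and it cannot be contained in $K_m$ since a clique has no independent set of size $2\le k$. Consequently the vertex sets of the three reconfiguration graphs of $G\bowtie K_m$ coincide with those of $G$, and since each reconfiguration move (a jump, a slide, or an addition/removal) among these sets only ever involves vertices of $G$, the edge sets coincide as well. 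Thus all three reconfiguration graphs are independent of $m$.

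Finally I would check that the family $\{G\bowtie K_m : m\ge1\}$ is infinite and consists of pairwise non-isomorphic graphs. This is immediate from a vertex count: $G\bowtie K_m$ has $|V(G)|+m$ vertices, a strictly increasing function of $m$, so no two members are isomorphic. Taking $\mathcal{F}=\{G\bowtie K_m : m\ge1\}$ then yields the desired infinite family.

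There is no serious obstacle here, since the heart of the argument---that joining with a graph of small independence number leaves all independent sets of size at least $k$ (and hence all reconfiguration moves among them) untouched---is exactly the observation preceding the proposition. The only points requiring a moment of care are ensuring that a single choice of auxiliary graphs works for all three paradigms at once (which it does, as the observation is paradigm-independent) and guaranteeing non-isomorphism of the resulting graphs (which the vertex count settles). If one prefers a family not built from cliques, any infinite sequence of non-isomorphic graphs with independence number at most $k-1$ would serve equally well.
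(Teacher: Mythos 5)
Your proposal is correct and follows essentially the same route as the paper, which proves the proposition via exactly the complete-join observation you invoke (the paper simply leaves the choice of auxiliary graph $H$ generic, whereas you instantiate $H=K_m$ and verify non-isomorphism by vertex count). The additional details you check---that all three paradigms are handled at once and that slides between sets inside the copy of $G$ can only use edges of $G$---are sound and consistent with the paper's argument.
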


\noindent
Because of this, there is no hope of determining~$G$ from any of its independent set reconfiguration graphs when $k\ge2$.
A natural follow-up question is to ask what happens if we forbid this particular construction, for instance by requiring that every time we have a complete join $G\cong G_1\bowtie G_2$, both~$G_1$ and~$G_2$ must contain independent sets of size~$k$.
In this case, the reconfiguration graph of~$G$ is the disjoint union of those of~$G_1$ and~$G_2$.
It is thus sufficient to only consider connected reconfiguration graphs.
The problem now seems to be a lot harder, and we only solve one case: Token Jumping for $k=2$, for graphs without a universal vertex (a vertex adjacent to all other vertices).

Observing that independent sets of size~$2$ in~$G$ correspond to non-edges of~$G$, we have that if $k=2$, then $\IkJ(G)$ is precisely the line graph of the complement~$\bar{G}$ of~$G$.
For a graph without isolated vertices we have that the graph is connected if and only if its line graph is connected.
So if we assume $\IkJ(G)$ is connected, then so is~$\bar{G}$ (since~$G$ has no universal vertices).
Therefore, using Whitney's Theorem~\cite{whitney1992} again, we deduce the following.

\begin{proposition}
\label{lem:TJ}\mbox{}\\*
	Let~$G$ be a graph without universal vertices such that $\mathcal{I}^2_\mathrm{J}(G)$ is connected.
	Then~$G$ can be determined from $\mathcal{I}^2_\mathrm{J}(G)$, unless $\mathcal{I}^2_\mathrm{J}(G)$ is a triangle~$K_3$ and~$\bar{G}$ is either~$K_3$ or the claw~$K_{1,3}$.
\end{proposition}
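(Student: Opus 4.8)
The plan is to reduce the statement to Whitney's isomorphism theorem~\cite{whitney1992} via the identification already noted just before the proposition. First I would make that identification precise: a size-$2$ independent set of $G$ is exactly a non-edge of $G$, i.e.\ an edge of $\bar{G}$, and under Token Jumping two such sets are adjacent precisely when they differ in one token, which happens exactly when the corresponding edges of $\bar{G}$ share a vertex. Hence $\mathcal{I}^2_{\rm J}(G)$ is, by definition, the line graph $L(\bar{G})$. Determining $G$ from $\mathcal{I}^2_{\rm J}(G)$ is therefore the same as determining $\bar{G}$ from $L(\bar{G})$, since $G=\overline{\bar{G}}$.

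Next I would transfer the connectivity hypothesis to $\bar{G}$. A vertex of $G$ adjacent to all others (equivalently, an isolated vertex of $\bar{G}$) lies in no independent set of size $2$, so it contributes nothing to $\mathcal{I}^2_{\rm J}(G)=L(\bar{G})$; such vertices are exactly those produced by the trivial complete-join construction discussed above, and so may be excluded from consideration. With no isolated vertices in $\bar{G}$, the line graph $L(\bar{G})$ is connected if and only if $\bar{G}$ is connected. Thus the assumption that $\mathcal{I}^2_{\rm J}(G)$ is connected lets me assume $\bar{G}$ is a connected graph, which is precisely the setting in which Whitney's theorem applies.

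I would then invoke Whitney's theorem: two connected graphs with isomorphic line graphs are themselves isomorphic, with the sole exception of the pair $K_3$ and $K_{1,3}$, both of which have line graph $K_3$. Applying this to $\bar{G}$: if $\mathcal{I}^2_{\rm J}(G)=L(\bar{G})$ is not isomorphic to $K_3$, then $\bar{G}$ is recovered uniquely, and hence so is $G$. If instead $\mathcal{I}^2_{\rm J}(G)\cong K_3$, then $\bar{G}\in\{K_3,\,K_{1,3}\}$ are the only two preimages, giving exactly the exceptional case named in the statement. To finish I would verify that these two options really do yield non-isomorphic graphs $G$ --- namely $\overline{K_3}$ and $\overline{K_{1,3}}$ --- confirming that the exception is genuine rather than an artefact of the proof.

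The bulk of the argument is bookkeeping, so the main obstacle is modest and lies in two places: cleanly transferring connectivity from $\mathcal{I}^2_{\rm J}(G)$ to $\bar{G}$ while accounting for the (invisible) universal vertices of $G$, and correctly matching the unique Whitney exception to the precise pair $\{K_3,K_{1,3}\}$ appearing in the statement. Neither step requires new ideas beyond the line-graph identification and the cited theorem.
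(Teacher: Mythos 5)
Your proof is correct and takes essentially the same route as the paper's: identify $\mathcal{I}^2_{\rm J}(G)$ with the line graph $L(\bar{G})$ of the complement, then apply Whitney's theorem, with $\bar{G}\in\{K_3,K_{1,3}\}$ as the unique exceptional pair when $L(\bar{G})\cong K_3$. If anything you are more explicit than the paper, which leaves implicit the point you spell out --- that universal vertices of $G$ (isolated vertices of $\bar{G}$) are invisible in $L(\bar{G})$ and are excluded only because the surrounding discussion forbids the complete-join construction.
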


\noindent
We know very little about the possibility of determining a graph~$G$ from $\IkS(G)$ or $\IkAR(G)$ if $k\ge2$, or from $\IkJ(G)$ if $k\ge3$.
It seems likely that it is possible if we only consider graphs from specific classes, or if we impose restrictions on these reconfiguration graphs (as we did in Proposition~\ref{lem:TJ}).

\begin{question}\mbox{}\\*
	\textup{(a) Given $k\ge2$, for what classes of graphs $\mcG$ is it possible to reconstruct any $G\in\mcG$ from one of $\IkJ(G)$, $\IkS(G)$ or $\IkAR(G)$?}

	\noindent
	\textup{(b) Given $k\ge2$, for what classes of graphs $\mcG$ is it possible to reconstruct any $G\in\mcG$ from one of $\IkJ(G)$, $\IkS(G)$ or $\IkAR(G)$, if the reconfiguration graph under consideration is connected?}
\end{question}

\paragraph{Acknowledgment}\mbox{}\\*
GB, CB, PH and TP would like to thank Marthe Bonamy for the helpful discussions about the lower bound construction in the Kempe-recolouring cases.

The authors thank two anonymous reviewers for exceptional careful reading and for their many suggestions that improved the presentation of the paper.

\printbibliography

\end{document}